\pgfplotsset{compat=newest}
\newtheorem{nummer}{ }
\newtheorem{thm}[nummer]{\bf Theorem}
\newtheorem{prop}[nummer]{\bf Proposition}
\newtheorem{lem}[nummer]{\bf Lemma}
\newtheorem{cor}[nummer]{\bf Corollary}
\newtheorem{exa}{\bf Example}
\theoremstyle{definition}
\newtheorem{rmk}{\bf Remark}
\newcommand{\dis} {\displaystyle}
\newcommand{\Q}{\mathds{Q}}
\newcommand{\Z}{\mathds{Z}}
\newcommand{\CN}{\mathcal C_N}
\newcommand{\dc}[1]{\textcolor{red}{#1}}
\newcommand{\cb}[1]{\textcolor{blue}{#1}}
\def\opargproof[#1]{\par\noindent {\bf #1 }}
\definecolor{darkgreen}{rgb}{0,.6,0}
\begin{document}
\begin{center}
{\LARGE\bf Integer triangles with a rational ratio of circumcircle radius to excircle radius}

\medskip

{\small Lorenz Halbeisen}\\[1.2ex] 
{\scriptsize Department of Mathematics, ETH Zentrum,
R\"amistrasse\;101, 8092 Z\"urich, Switzerland\\ lorenz.halbeisen@math.ethz.ch}\\[1.8ex]
{\small Norbert Hungerb\"uhler}\\[1.2ex] 
{\scriptsize Department of Mathematics, ETH Zentrum,
R\"amistrasse\;101, 8092 Z\"urich, Switzerland\\ norbert.hungerbuehler@math.ethz.ch}\\[1.8ex]
{\small Arman Shamsi Zargar}\\[1.2ex] 
{\scriptsize Department of Mathematics and Applications, Faculty of Mathematical Sciences, 
	University of Mohaghegh Ardabili,
Ardabil, Iran\\ zargar@uma.ac.ir}
\end{center}

\hspace{5ex}{\small{\it key-words\/}: circumcircle, excircle, elliptic curve}

\hspace{5ex}{\small{\it 2020 Mathematics Subject Classification\/}: {\bf 11D72} 11G05}

\begin{abstract}\noindent
{\small We consider the problem of finding integer triangles with $R/r$ a positive rational, where $R$ and $r$ are the radii of the circumcircle and an excircle, respectively.
We show that for general triangles $R/r>1/4$ applies.
The equation $R/r=N$ turns out to be related to the elliptic curve $\mathcal{E}_N$ given by $v^2=u^3+2(2N^2+2N-1)u^2-(4N-1)u$.
If $N>1/4$ is rational, then the torsion group of $\mathcal{E}_N$ is $\Z/2\Z\times\Z/6\Z$ if $N(N+2)$ is a square and $\Z/6\Z$ otherwise.
We show that a rational triangle with rational ratio $R/r=N$ exists if and only if $N>1/4$ and there exists a rational non-torsion point on the curve $\mathcal{E}_N$ which satisfies a certain 
condition. Furthermore, we show that the rank of $\mathcal{E}_N$ is positive when 
$N = m^2 \pm 1>1/4$ for a rational $m$. We also show that on 
every curve $\mathcal{E}_N$ whose rank is positive, there are  infinitely many 
rational points which lead to infinitely many non-similar integer triangles with $R/r=N$.}
\end{abstract}

\section{Introduction}\label{sec-1}
Consider a triangle with sides of integer length $f, g, h$. 
Prominent circles associated with the triangle are the circumcircle
which passes through the three vertices, the incircle, and the three excircles which have the three sides as tangents.
The radii of the circumcircle $C$ and one of the three excircles $E$ are denoted in the following by $R$ and $r$, respectively (see Figure~\ref{fig1}).
Let $d$ be the distance between the 
centres of these two circles. 
It is a standard result 
in triangle geometry that 
\begin{equation}\label{euler}
d^2 = R(R+2r)
\end{equation}
(see, e.g.,~\cite[Theorem 295]{johnson}).
%

\begin{figure}[htbp]
\begin{center}
\begin{tikzpicture}[line cap=round,line join=round,x=22,y=22]
\clip(-2.54,-2.06) rectangle (6.84,4.2);
\draw[line width=1.pt,fill=black,fill opacity=0.1] (-1.88,-0.78) -- (2.8,-0.86) -- (3.24,2.08) -- cycle;
\draw [line width=.5pt,domain=-1.88:6.839999999999995] plot(\x,{(-3.8008-0.08*\x)/4.68});
\draw [line width=.5pt,domain=-1.88:6.839999999999995] plot(\x,{(--1.3832--2.86*\x)/5.12});
\draw [line width=0.8pt] (4.909180215269176,0.9258364631531464) circle (1.8216247009408981);
\draw [line width=0.8pt] (0.49091457403004113,0.9885025807574094) circle (2.957843352088421);
\begin{small}
\draw [line width=.5pt] (4.909180215269176,0.9258364631531464)-- (4.878045879970928,-0.8955221517943747) node[midway,right,xshift=-2pt,yshift=2pt]{$r$};
\draw [line width=.5pt] (0.49091457403004113,0.9885025807574094)-- (-1.88,-0.78)node[midway,above,xshift=-0pt,yshift=0pt]{$R$};
\draw [fill=white] (-1.88,-0.78) circle (1.5pt);
\draw [fill=white] (2.8,-0.86) circle (1.5pt);
\draw [fill=white] (3.24,2.08) circle (1.5pt);
\draw [fill=white] (4.909180215269176,0.9258364631531464) circle (1.5pt);
\draw [fill=white] (4.878045879970928,-0.8955221517943747) circle (1.5pt);
\draw[color=black] (6.52,2.33) node {$E$};
\draw [fill=white] (4.0208312593009365,2.5161674612501326) circle (1.5pt);
\draw [fill=white] (3.10761954299925,1.1954578554949868) circle (1.5pt);
\draw[color=black] (-1.72,3.29) node {$C$};
\draw [fill=white] (0.49091457403004113,0.9885025807574094) circle (1.5pt);
\draw[color=black] (0.5,-1.1) node {$g$};
\draw[color=black] (2.72,0.37) node {$h$};
\draw[color=black] (0.78,0.44) node {$f$};
\end{small}
\end{tikzpicture}
\caption{A triangle with sides $f,g,h$, circumcircle $C$ with radius $R$, excircle $E$ touching $h$ from outside with radius $r$.}\label{fig1}
\end{center}
\end{figure}
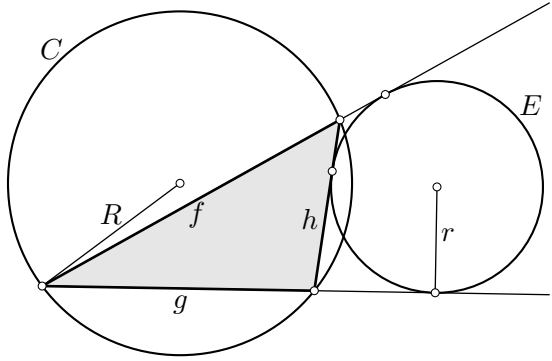
As a consequence of equation~\eqref{euler}, we obtain the following.

\begin{lem}
	\label{lem1}
	$R/r>1/4$.
\end{lem}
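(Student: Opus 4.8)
The plan is to convert the inequality into a statement about the distance $d$ and then bound $d$ geometrically. Rearranging~\eqref{euler} gives $2Rr = d^2 - R^2$, so that
\[
\frac{R}{r} \;=\; \frac{2R^2}{d^2 - R^2},
\]
and since $d^2-R^2 = 2Rr > 0$ we may cross-multiply to see that $R/r > 1/4$ is equivalent to $8R^2 > d^2 - R^2$, i.e.\ to $d < 3R$. (Note that~\eqref{euler} immediately yields the lower bound $d > R$, reflecting the fact that the excentre lies outside the circumcircle, but this alone is not enough.) Thus it suffices to prove that the centre of the excircle $E$ lies within distance $3R$ of the circumcentre.

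To establish $d<3R$, let $O$ be the circumcentre and $O'$ the centre of $E$, so that $d = |OO'|$, and let $X$ be the vertex opposite the side $h$ that $E$ touches. Let $M$ be the midpoint of the arc cut off by $h$ on the circumcircle that does not contain $X$. By the incentre--excentre lemma, $M$ is equidistant from the two endpoints of $h$, from the incentre, and from the excentre $O'$; in particular $|MO'| = |MY|$, where $Y$ is an endpoint of $h$. Since $MY$ is a chord of the circumcircle, $|MO'| = |MY| \le 2R$, and the equality case is excluded for a nondegenerate triangle. As $M$ lies on the circumcircle we have $|OM| = R$, so the triangle inequality gives
\[
d \;=\; |OO'| \;\le\; |OM| + |MO'| \;<\; R + 2R \;=\; 3R,
\]
which is exactly what is needed.

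The only genuine content is this upper bound $d<3R$: writing $A$ for the angle at $X$, one has $|MO'| = |MY| = 2R\sin(A/2) < 2R$, and it is here that nondegeneracy ($A<\pi$) enters and makes the bound strict. I expect this to be the main obstacle, precisely because $d$ is \emph{not} controlled by the diameter $2R$ — indeed $d \to 3R$ as the triangle degenerates with $A\to\pi$ — so one must interpose the arc-midpoint $M$ to bound $d$ via the triangle inequality rather than bounding it directly. (An alternative, purely trigonometric route avoids $d$ altogether: from $r = 4R\sin(A/2)\cos(B/2)\cos(C/2)$ one gets $R/r = 1/\bigl(4\sin(A/2)\cos(B/2)\cos(C/2)\bigr) > 1/4$, since each factor lies in $(0,1)$; but the argument above is the one that flows directly from~\eqref{euler}.)
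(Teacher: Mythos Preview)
Your proof is correct, but it takes a different geometric route from the paper's. Both arguments start from~\eqref{euler}, but they bound $d$ from opposite sides.

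The paper's proof is shorter and more elementary: it observes that if $r>R$ (the only nontrivial case, since $r\le R$ gives $R/r\ge 1$), then the vertices of the triangle lie outside the excircle, which forces $d+R>r$, i.e.\ $d>r-R$. Squaring and substituting $d^2=R^2+2Rr$ from~\eqref{euler} gives $R^2+2Rr>(r-R)^2$, hence $4R>r$. No auxiliary point or lemma is needed---just the bare fact that from each vertex one can draw two distinct tangent lines to the excircle.

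Your argument instead reduces~\eqref{euler} to the \emph{upper} bound $d<3R$ and establishes that via the incentre--excentre lemma and the triangle inequality through the arc midpoint $M$. This requires a named (if standard) result, but in exchange it avoids the case split on $r\lessgtr R$, and it makes transparent why the constant $1/4$ is sharp: your identity $|MO'|=2R\sin(A/2)$ shows $d\to 3R$ exactly as $A\to\pi$, matching the degeneration behaviour you point out. The trigonometric alternative you mention at the end is also valid and is essentially the same computation in different clothing.
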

\begin{proof}
If $r$ is the radius of an excircle and $r > R$, then, since the vertices of the triangle lie outside the excircle, we have $d + R > r$, i.e., 
$d > r - R$. By \eqref{euler}, we obtain
$$d^2 = R^2 + 2Rr > R^2 - 2Rr + r^2,$$ 
which implies $4R > r$, i.e., $R/r > 1/4$. 
\end{proof}

Equally well known are the formulas 
\begin{equation}
	\label{R,r}
	R=\frac{fgh}{4\Delta}, \quad r=\sqrt{\frac{s(s-f)(s-g)}{s-h}}
\end{equation}
where $r$ is the radius of the excircle touching $h$ from outside, $s=(f+g+h)/2$ is the semi-perimeter, and $\Delta=\sqrt{s(s-f)(s-g)(s-h)}$ the area of the triangle. Hence, we have
\begin{equation}
	\label{R/r}
	\frac{R}{r}=\frac{2fgh}{(f+g+h)(f-g-h)(g-h-f)}.
\end{equation}
Observe that for non-degenerate triangles, the triangle inequality applies, i.e., the denominator on the right hand side of~\eqref{R/r} is different from $0$. 

It should be noted that we  can always scale a triangle with rational sides to a similar triangle with integer sides without changing the value of $R/r$.
It is interesting to speculate whether the ratio $R/r$ could ever be an integer $N$ for a  triangle with integer sides. In \cite{MacLeod}, MacLeod studied a similar problem for such  triangles with $R/\rho$ an integer, where $R$ and $\rho$ are the radii of the circumcircle and incircle, respectively. He showed that these  triangles are relatively rare, and by brute force found only a handful such integer triangles. It is interesting to see that our problem, although very similar to MacLeod's, 
behaves quite differently. Indeed, we will see in Section~\ref{sec-2} that our problem leads to finding certain rational non-torsion points on the elliptic curve $\mathcal{E}_N$ given by $v^2=u^3+2(2N^2+2N-1)u^2-(4N-1)u$. In other words, the existence of integer triangles with rational ratio $R/r=N$ is equivalent to the existence of special non-torsion points on the curve $\mathcal{E}_N$. The torsion group of $\mathcal{E}_N$ and the torsion points which are needed later are determined in Section~\ref{sec-torsion}.
Section~\ref{sec-4} identifies the non-torsion points that correspond to real triangles.
So, a necessary condition for the existence of such triangles is that the rank of $\mathcal{E}_N$ is positive. We will show in Section~\ref{sec-3} that this is in particular the case for the values  $N=m^2\pm 1$ for any rational number $m\geq 2$.
We then show in Section~\ref{sec:triangles} that whenever the rank of $\mathcal{E}_N$ is positive for a rational $N>1/4$, 
  infinitely many pairwise non-similar integer triangles with $R/r=N$ exist. 
We conclude in Section~\ref{sec:poncelet} with an application of our results to Poncelet's porism.

\section{The construction of the elliptic curve $\mathcal{E}_N$}\label{sec-2}

Suppose we have a triangle with rational sides $f,g,h$ such that the ratio of the radius $R$ of the circumcircle, and the radius $r$ of the excircle touching $h$ from outside
\begin{equation}
	\label{4}
	N=\frac Rr=\frac{2fgh}{(f+g+h)(f-g-h)(g-h-f)}
\end{equation}
is a rational number.
If we multiply~\eqref{4} by the denominator and move all terms to the left, we can rewrite~\eqref{4} as a cubic equation in the variables $f,g,h$:
\begin{equation*}
	Nf^3-N(g-h)f^2-(g^2N+2gh(N-1)+h^2N)f+N(g-h)(g+h)^2=0.
\end{equation*}
Now, we replace $h$ by $2s - f - g$ and obtain the following quadratic equation in $f$:
\begin{equation}
	\label{6}
	gf^2+\big(g^2+2s(2N-1)g-4s^2N\big)f-4(g-s)s^2N=0.
\end{equation}
Since $f$ is rational, the quadratic equation~\eqref{6} must have a discriminant
which is a rational square, so that there must be a rational $d$ with
\begin{equation*}
	d^2=g^4+4s(2N-1)g^3+4s^2(4N^2-2N+1)g^2-32N^2s^3g+16N^2s^4.
\end{equation*}
If we substitute $d=s^2y$ and $g=sx$, we get the quartic equation
\begin{equation}
	\label{8}
\CN:	y^2=x^4+4(2N-1)x^3+4(4N^2-2N+1)x^2-32N^2x+16N^2.
\end{equation}
Now we apply the birational transformation 
\begin{align}
x&= -\frac{4 N (2 N u+v)}{(u-1) (4 N+u-1)},  \label{eq-x}\\
y&= -\frac{4 N \left(4 N+u^2-1\right) \left(8 N^2 u+4 N u+4 N v-4 N+u^2-2 u+1\right)}{(u-1)^2 (4 N+u-1)^2}\label{eq-y}
\end{align}
to $\CN$. This transformation leads to 
a cubic  curve in $u$ and $v$ which is given by 
\begin{equation}\label{9}
	\mathcal{E}_N: v^2=u^3+2(2N^2+2N-1)u^2-(4N-1)u.
\end{equation}
The curve $\mathcal{E}_N$ is regular for positive rational $N\neq1/4$. Conversely, we get from $\mathcal{E}_N$ to $\CN$ with the birational transformation
%
\begin{align}
	u&=-\frac{8N^2x+2Nx^2-8N^2+2Ny-x^2}{x^2}, \\
	v&=-\frac{2N(x-2)(8N^2x+2Nx^2-8N^2+2Ny-x^2)}{x^3}.
\end{align}
Thus, starting with the rational triangle with sides $f,g,h$ and rational ratio $R/r=N$, we can find a rational point $(u,v)$ on
the cubic curve $\mathcal{E}_N$. Using the equations in the opposite direction, we can also reconstruct the sides $f,g,h$ from $(u,v)$, at least up to scaling.
To do so, use~\eqref{eq-x} and~\eqref{eq-y} to find $(x,y)$ on $\CN$ and then the formulas in Theorem~\ref{th:1} to compute the sides of the triangle.
However, not every rational point $(u,v)$ on $\mathcal{E}_N$ leads to numbers $f,g,h$ which are the sides of a triangle.
So our next task is to identify those rational points $(u,v)$ on $\mathcal{E}_N$ that lead to {\em positive\/} values of $f,g,h$ which
satisfy the {\em triangle inequalities} $f+g>h, g+h>f$ and $h+f>g$. The three triangle inequalities can be equivalently expressed
by the single inequality
$$
 (f + g - h) (f - g + h)(-f + g + h) (f + g + h)>0
$$
or, also equivalently,
$$
(f^2+g^2+h^2)^2>2(f^4+g^4+h^4).
$$
Observe that if we replace $N$ by $-N$ in \eqref{9}, we get 
MacLeod's curve~\cite[eq.~(9)]{MacLeod}. Thus, our discussion 
also covers MacLeod's problem in the case when $N$ 
is a rational number.

Also notice that if $R/r$ is rational for a rational triangle for just one of its three excircles, then all ratios of radii of the three excircles, the incircle, and the circumcircle are rational.
Another remark is that the formula~\eqref{4} is symmetric in $f$ and $g$. Hence, if we start with a rational point $(u,v)$ on $\mathcal{E}_N$ which
corresponds to a triangle with sides $f,g,h$, then we can find another rational point $(u',v')$ on $\mathcal{E}_N$ which belongs to the mirrored triangle with sides $g,f,h$.

\section{Torsion points of $\mathcal{E}_N$}\label{sec-torsion}
One can easily check that the torsion group of $\mathcal{E}_N$ for rationals $N>1/4$ contains $\Z/6\Z$ with torsion points $$T_2=(0, 0), \ T_3^{\pm}=(1,\pm 2N), \ T_6^{\pm}=(1-4N,\pm 2N(4N - 1))$$
of orders~$2$, $3$, $6$ respectively, see \cite[Section~2]{MacLeod}.
However, $T_2$ corresponds via~\eqref{eq-x} to $x=0$ and hence $g=0$, which does not give a real triangle.
For $T_3^{\pm}$ and $T_6^{\pm}$, the expression~\eqref{eq-x} is not defined. 
Thus, we must look at the second type of rational points---those of infinite order. 
In particular, since the ranks of $\mathcal{E}_N$ are zero for the following integers $N$, no integer or rational triangle with
$R/r=N$ exists:
$$
N=
1,
2,
4,
6,
7,
9,
12,
14,
16,
18,
19,
20,
21,
22,
25,
28,
30,
\ldots
$$
Also notice that the ranks of the curves $\mathcal{E}_{1}$, $\mathcal{E}_{1/2}$ and $\mathcal{E}_{1/3}$ are zero, and recall that $R/r>1/4$. Therefore no rational triangle
exists with $r/R$ an integer.

Let us now consider the case when $N>1/4$ is a rational and $N(N+2)=M^2$ for some $M\in\Q$. In this case, $N$ cannot be an integer, but, for example, for $N = 2/3$ we get $N(N + 2) =(4/3)^2$. If
$N(N + 2) = M^2$, then $\mathcal{E}_N$ has the two additional torsion points of order~$2$
$$\left(1-2N(N+1)\pm 2 NM,0\right),$$
which shows that the torsion group of $\mathcal{E}_N$ in the case when $N(N+2)=M^2$ is $\Z/2\Z \times \Z/6\Z$.

Finally, let us consider the case when $N > 1/4$ is  rational and $N(N+2)$ is not a square. Then the torsion group of $\mathcal{E}_N$ contains $\Z/6\Z$ and is different from $\Z/2\Z\times \Z/6\Z$. Thus, it is either $\Z/6\Z$ or $\Z/12\Z$. Suppose 
that $P = (u, v)$ is a point of order~$12$. This is equivalent to saying that the point $[2]P$ is of order~$6$, i.e.,
$$\frac{(u^2+4N-1)^2}{4\big(u^3+2(2N^2+2N-1)u^2-(4N-1)u\big)}=1-4N.$$
Replacing $u$ by $u + (1 - 4 N)$
this corresponds to the quartic equation 
\begin{equation}\label{eqn:no_real_roots}
u^4+8 N (1 - 6 N + 8 N^2)u^2-32 (1 - 4 N)^2 
N^2 u+64 (1 - 4 N)^2 N^3=0.
\end{equation}
Using the criterion given in the table on page~53 in~\cite{Arnon},
where 
$$\delta=1048576 (1 - 4 N)^6(2N^7 + N^8)$$
and
$$L=-1024 N^3 (-1 + 4 N)^3 (-1 + N + 16N^2 + 8N^3),$$
it is easy to check that for $N>1/4$ we have
$\delta>0$ and $L\leq 0$,
which implies that equation~\eqref{eqn:no_real_roots}
does not have real roots for $N>1/4$. Hence, the 
torsion group  $\Z/12\Z$ is excluded.

In summary, we obtain the following result:
\begin{prop} 
Let $N>1/4$ be a rational number. Then the torsion group of $\mathcal{E}_N$ is $\Z/2\Z \times \Z/6\Z$ if $N(N+2)=M^2$ for some $M\in\Q$,
and $\Z/6\Z$ otherwise.
\end{prop}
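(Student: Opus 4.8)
The plan is to pin down the torsion group by combining Mazur's classification with the explicit torsion already exhibited. Since the points $T_2,T_3^{\pm},T_6^{\pm}$ displayed above show that the torsion group of $\mathcal{E}_N$ always contains a copy of $\Z/6\Z$, Mazur's theorem on torsion subgroups of elliptic curves over $\Q$ leaves only three possibilities: $\Z/6\Z$, $\Z/12\Z$, or $\Z/2\Z\times\Z/6\Z$. Among these, $\Z/2\Z\times\Z/6\Z$ is the unique one containing the full $2$-torsion $\Z/2\Z\times\Z/2\Z$, while $\Z/12\Z$ is the unique one containing a point of order $4$; the three cases are mutually exclusive. So it suffices to decide, in terms of $N$, whether $\mathcal{E}_N$ acquires full $2$-torsion, and separately to rule out points of order $4$.

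First I would treat the full $2$-torsion. Factoring the right-hand side of \eqref{9} as $u\bigl(u^2+2(2N^2+2N-1)u-(4N-1)\bigr)$, the point $T_2=(0,0)$ accounts for the factor $u$, and the remaining $2$-torsion is rational exactly when the quadratic factor splits over $\Q$. A direct computation gives its discriminant as $16N^3(N+2)=(4N)^2\,N(N+2)$, which is a rational square if and only if $N(N+2)=M^2$ for some $M\in\Q$. In that case the two extra $2$-torsion points are precisely $\bigl(1-2N(N+1)\pm 2NM,\,0\bigr)$, the full $2$-torsion is present, and hence the torsion group is $\Z/2\Z\times\Z/6\Z$.

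It then remains to show that when $N(N+2)$ is not a square the group cannot be $\Z/12\Z$; equivalently, that $\mathcal{E}_N$ has no point of order $4$. Here the unique rational $2$-torsion point is $T_2=(0,0)$, so any putative order-$4$ point $Q=(u,v)$ would have to satisfy $[2]Q=(0,0)$. Using the duplication formula $u([2]Q)=\dfrac{(u^2+4N-1)^2}{4\bigl(u^3+2(2N^2+2N-1)u^2-(4N-1)u\bigr)}$, the condition $u([2]Q)=0$ forces $u^2+4N-1=0$, i.e. $u^2=1-4N<0$ for $N>1/4$, which has no real solution. Thus no order-$4$ point exists, $\Z/12\Z$ is excluded, and the torsion group is $\Z/6\Z$ in the remaining case. (The same exclusion can be obtained, as indicated above, by instead doubling a hypothetical order-$12$ point onto the order-$6$ point $(1-4N,\ast)$ and checking via the quartic \eqref{eqn:no_real_roots} and the signs of $\delta$ and $L$ that no real root exists for $N>1/4$.)

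The routine parts are the discriminant computation and the invocation of Mazur's theorem; the one step deserving care is the exclusion of order-$4$ (equivalently order-$12$) points. The short argument above reduces this to the manifestly impossible equation $u^2=1-4N$, so the real content is recognising that the only $2$-torsion available to halve is $(0,0)$ and that halving it over $\R$ is obstructed precisely by the standing hypothesis $N>1/4$.
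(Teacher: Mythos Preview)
Your argument is correct, and in fact your exclusion of $\Z/12\Z$ is cleaner than the paper's. The paper supposes $P$ has order~$12$, doubles to the order-$6$ locus $u([2]P)=1-4N$, obtains the quartic~\eqref{eqn:no_real_roots}, and then invokes Arnon's discriminant criterion to show it has no real roots for $N>1/4$. You instead pass to the order-$4$ point $Q=[3]P$: since $N(N+2)\notin\Q^{2}$ leaves $(0,0)$ as the sole rational $2$-torsion point, $[2]Q=(0,0)$ forces $(u^2+4N-1)^2=0$, i.e.\ $u^2=1-4N<0$. This replaces a quartic-plus-discriminant analysis by a one-line impossibility, at the cost of using explicitly that $(0,0)$ is the \emph{only} rational $2$-torsion---a fact the paper also needs but your route exploits more directly. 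The remaining ingredients (Mazur's list narrowing the options to $\Z/6\Z$, $\Z/12\Z$, $\Z/2\Z\times\Z/6\Z$; the discriminant $16N^3(N+2)$ governing full $2$-torsion) are handled identically in both.
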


\section{The triangle inequalities}\label{sec-4}
To simplify the notation, we will use the following abbreviations
$$
\begin{aligned}
	A_1&:=-x^2-2(2N-1)x+4N, & A_3&:=x^2-4Nx+4N, \\ 
	A_2&:=-x^2+2(2N+1)x-4N, & A_4&:=x^2+4Nx-4N, 
\end{aligned}
$$
and 
$$B:=x^4+4(2N-1)x^3+4(4N^2-2N+1)x^2-32N^2x+16N^2.$$
The following theorem now provides information on the rational points on $\CN$  which correspond to real triangles.
\begin{thm}
	\label{th:1}
	A rational triangle with rational quotient $R/r=N>1/4$ exists  if and only if $\CN$ has a rational point $(x,y)$ such that $0<x<1$. In this case, the sides of the triangle are given by
	$$f=\frac{s}{2x}(A_1-\sqrt{B}), \ g=sx, \ h=\frac{s}{2x}(A_2+\sqrt{B}),$$
	for $0<s\in\Q$.
\end{thm}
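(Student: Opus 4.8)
The plan is to prove both implications by running through the quadratic equation~\eqref{6} in $f$ and its discriminant, simply reversing the construction of Section~\ref{sec-2}. The conceptual shortcut I would use is that a triple $(f,g,h)$ of reals with $f+g+h=2s$ forms the sides of a nondegenerate triangle if and only if $s-f>0$, $s-g>0$ and $s-h>0$: each triangle inequality $f+g>h$, $g+h>f$, $h+f>g$ is equivalent to one of these, and the three together already force $f,g,h>0$. So the entire problem reduces to controlling the signs of $s-f$, $s-g$, $s-h$, and it is exactly these three quantities that will factor through $A_3$, $A_4$ and $1-x$.

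\emph{The forward direction.} Suppose a rational triangle with $R/r=N$ exists, with $r$ the radius of the excircle touching $h$. Put $0<s=(f+g+h)/2\in\Q$ and $x=g/s$. Since the sides are rational and satisfy~\eqref{4}, replacing $h$ by $2s-f-g$ shows that $f$ is a rational root of the quadratic~\eqref{6}; hence its discriminant is a rational square, say $d^2$ with $d\in\Q$. The computation of Section~\ref{sec-2} identifies this discriminant as $s^4B$, so with $y:=d/s^2\in\Q$ we obtain $y^2=B$, i.e.\ a rational point $(x,y)$ on $\CN$. Finally $x=g/s>0$, while the triangle inequality $g<f+h$ is equivalent to $g<s$, i.e.\ to $x<1$; thus $0<x<1$.

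\emph{The backward direction.} Conversely, let $(x,y)$ be a rational point on $\CN$ with $0<x<1$, fix any $0<s\in\Q$, and write $\sqrt B=|y|\ge 0$. After substituting $g=sx$ one has $-(g^2+2s(2N-1)g-4s^2N)=s^2A_1$, so solving~\eqref{6} shows that $f=\tfrac{s}{2x}(A_1-\sqrt B)$ is one of its two rational roots, while $g=sx$ and $h=2s-f-g=\tfrac{s}{2x}(A_2+\sqrt B)$. A direct computation gives
$$s-g=s(1-x),\qquad s-f=\frac{s}{2x}\bigl(A_4+\sqrt B\bigr),\qquad s-h=\frac{s}{2x}\bigl(A_3-\sqrt B\bigr).$$
The first is positive because $x<1$. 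For the other two I would record the polynomial identities
$$A_4^2-B=-4x^2(1-x),\qquad A_3^2-B=4(4N-1)\,x^2(1-x),$$
both routine to verify. For $0<x<1$ the first identity is negative, so $|A_4|<\sqrt B$ and hence $A_4+\sqrt B>0$, giving $s-f>0$. For the second, $N>1/4$ makes $4N-1>0$, so $A_3^2>B\ge 0$; together with $A_3=x^2+4N(1-x)>0$ this yields $A_3>\sqrt B$, i.e.\ $s-h>0$. (Note $B=A_4^2+4x^2(1-x)>0$ on $(0,1)$, so $\sqrt B$ is a genuine positive rational and $f,g,h\in\Q$.) Thus $s-f,s-g,s-h>0$, so $(f,g,h)$ is a nondegenerate rational triangle; in particular the denominator $8s(s-f)(s-g)$ of~\eqref{4} does not vanish, so~\eqref{6} reverts to~\eqref{4} and $R/r=N$.

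The genuinely routine part is the verification of the two displayed identities; the substantive point is the reduction to $s-f,s-g,s-h$ and the observation that precisely these three quantities are governed by $A_3$, $A_4$ and $1-x$, with signs pinned down by the identities above. The only place the hypothesis $N>1/4$ enters is $A_3^2-B=4(4N-1)x^2(1-x)>0$, which forces $s-h>0$; at $N=1/4$ it degenerates to $s-h=0$ and for $N<1/4$ it reverses, in agreement with Lemma~\ref{lem1}. I expect the main obstacle to be purely bookkeeping: keeping the sign convention $\sqrt B=|y|$ consistent so that the stated formulas, rather than their sign variants (which produce the mirrored triangle noted at the end of Section~\ref{sec-2}), indeed give the positive sides.
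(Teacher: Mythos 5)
Your proof is correct, and its skeleton is the paper's own: the forward direction is essentially verbatim, and in the backward direction you take the same root $f=\tfrac{s}{2x}(A_1-\sqrt B)$ of \eqref{6} and settle all signs with the same two identities that the paper uses for $A_3$ and $A_4$ (stated there as $A_3^2-B=-4x^2(x-1)(4N-1)$ and $B-A_4^2=-4x^2(x-1)$). The difference is the decomposition of the verification, and it buys a genuine economy. The paper checks six inequalities --- $f>0$, $g>0$, $h>0$ and the three triangle inequalities --- and for the first group needs two further identities, $A_1^2-B=-16Nx(x-1)$ and $B-A_2^2=16Nx(x-1)^2$, the latter also serving to eliminate the other root $f_+$ (whose companion $h_-$ is negative). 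Your reduction to the semiperimeter quantities $s-f=\tfrac{s}{2x}\bigl(A_4+\sqrt B\bigr)$, $s-g=s(1-x)$, $s-h=\tfrac{s}{2x}\bigl(A_3-\sqrt B\bigr)$ --- which are exactly half of the paper's $g+h-f$, $f+h-g$, $f+g-h$ --- renders the $A_1$ and $A_2$ computations superfluous, since positivity of the sides is then automatic (e.g.\ $f=(s-g)+(s-h)>0$); and discarding $f_+$ is not logically required either, as the theorem only asserts that the displayed formulas yield a triangle. You also make explicit a point the paper leaves implicit: that the constructed triple indeed has $R/r=N$, because the denominator of \eqref{4} equals $8s(s-f)(s-g)\neq 0$, so the root property \eqref{6} genuinely reverts to \eqref{4}. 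All your identities check out ($2x-A_1=A_4$, $2x-A_2=A_3$, and the two discriminant identities), and your localization of where $N>1/4$ enters matches the paper's proof, so there is no gap.
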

\begin{proof}
Let $f,g,h$ be sides of a rational triangle with $R/r=N$. Then, as we have seen in Section~\ref{sec-2}, $(x,y)$ must be a rational point on $\CN$.
From $g=sx$ in~\eqref{6}, we infer that $x>0$. On the other hand, we have from the triangle inequality
$$
0<f-g+h\ \ \iff\ \  2g<f+g+h\ \ \iff\ \ x=\frac gs=\frac{2g}{f+g+h}<1.
$$
For the opposite direction, assume we have a rational point $(x,y)$ on $\CN$ with $0<x<1$. We may also assume that $y>0$.
For $g=sx$, equation~\eqref{6} has the following roots for $f$:
$$f_{\pm}=\frac{s}{2x} (A_1\pm \sqrt{B}).$$
Note that by \eqref{8}, $0<\sqrt{B}=y\in\Q$.  

By $h_\mp=2s-f_\pm-g$, we get the following two values:
$$h_{\mp}=\frac{s}{2x}(A_2\mp \sqrt{B}),$$
and hence
\begin{align}
	f_\pm+g-h_\mp&=\frac{s}{x}(A_3\pm \sqrt{B}), \nonumber \\
	f_\pm+h_\mp-g&=-2s(x-1), \label{x<1}\\
	g+h_\mp-f_\pm&=\frac{s}{x}(A_4\mp \sqrt{B}). \nonumber
\end{align}

We observe that
$$B-A_2^2=16Nx(x-1)^2>0 \Longrightarrow B>A_2^2 \Longrightarrow \sqrt{B}>A_2 \Longrightarrow h_{-}<0.$$
Hence, the root $f=f_{+}$ does not lead to a triangle. So, we will take $f=f_{-}$ and $h=h_+$. In this case, we have the following. 
\begin{itemize}
	\item Since $N>0$ and $0<x<1$, we have $A_1>0$. Then
	$$
	\begin{aligned}
		f>0 & \Longleftrightarrow A_1-\sqrt{B}>0 \Longleftrightarrow A_1>\sqrt{B}\\
		& \Longleftrightarrow A_1^2>B  \Longleftrightarrow A_1^2-B=-16Nx(x-1)>0,
	\end{aligned}
	$$
	which is the case. 
	\item 
	Since $B>A_2^2$, it follows that $h=h_+>0$.
	\item Trivially, we have $g=sx>0$.
	\end{itemize}
	 It remains to check that $f,g,h$ satisfy the triangle inequalities.
	\begin{itemize}
	\item Since $N>0$ and $0<x<1$, we have $A_3>0$. Then we have
	$$
	\begin{aligned}
		f+g-h>0 & \Longleftrightarrow A_3-\sqrt{B}>0 \Longleftrightarrow A_3>\sqrt{B}\\
		& \Longleftrightarrow A_3^2>B  \Longleftrightarrow A_3^2-B=-4x^2(x-1)(4N-1)>0,
	\end{aligned}
	$$
	which is the case. 
	\item To show $g+h-f>0$ it is enough to show that $A_4+\sqrt{B}>0$. We have:
	$$
	\begin{aligned}
		B-A_4^2&=-4x^2(x-1)>0  \Longrightarrow (\sqrt{B}-A_4)(\sqrt{B}+A_4)>0\\
		& \Longrightarrow \sqrt{B}-A_4>0, \sqrt{B}+A_4>0 \ \text{or} \  \sqrt{B}-A_4<0, \sqrt{B}+A_4<0.
	\end{aligned}
	$$
	But the second case cannot happen, since then $0<\sqrt{B}<A_4$ implies $B-A_4^2<0$, a contradiction. Therefore, the first case happens and hence $g+h-f>0$.
	\item Finally, we have $f+h-g>0$ by~\eqref{x<1}.
\end{itemize}
\end{proof}
We can carry over the previous result from $\CN$ to the elliptic curve $\mathcal{E}_N$.
\begin{thm}
	\label{th:2}
	A rational triangle with rational quotient $R/r=N>1/4$ exists  if and only if $\mathcal{E}_N$ has a rational non-torsion point
	 $(u,v)$ such that 
	\begin{equation}
		\label{condition}
		0<-\frac{4N(v+2Nu)}{(u-1)(u+4N-1)}<1.
	\end{equation}
	In this case, the sides are given by
	$$f=\frac{s}{2x}(A_1-\sqrt{B}), \ g=sx, \ h=\frac{s}{2x}(A_2+\sqrt{B}),$$
	where $$x=-\frac{4N(v+2Nu)}{(u-1)(u+4N-1)}$$ and $0<s\in\Q$.
\end{thm}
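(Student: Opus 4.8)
The plan is to transport the criterion of Theorem~\ref{th:1} from the quartic $\CN$ to the cubic $\mathcal{E}_N$ by means of the birational transformation \eqref{eq-x}--\eqref{eq-y} and its inverse constructed in Section~\ref{sec-2}. The decisive observation is that under this transformation the $x$-coordinate of a point on $\CN$ is literally the expression \eqref{eq-x} in $(u,v)$, so that the hypothesis $0<x<1$ of Theorem~\ref{th:1} turns verbatim into condition \eqref{condition}. Consequently the side formulas of Theorem~\ref{th:1} carry over unchanged, now with $x$ read off from $(u,v)$ via \eqref{eq-x}. Thus the only content to be added to Theorem~\ref{th:1} is (i) that the correspondence of rational points is faithful in both directions, and (ii) that the points meeting \eqref{condition} may be taken to be non-torsion.

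For (i) I would argue as follows. If a rational triangle with $R/r=N$ exists, then Theorem~\ref{th:1} yields a rational point $(x,y)$ on $\CN$ with $0<x<1$; applying the inverse map (the transformation from $\mathcal{E}_N$ to $\CN$ displayed in Section~\ref{sec-2}) produces a rational point $(u,v)$ on $\mathcal{E}_N$, and since $x$ is recovered from $(u,v)$ through \eqref{eq-x}, the inequality $0<x<1$ is exactly \eqref{condition}. Conversely, given a rational point $(u,v)$ on $\mathcal{E}_N$ satisfying \eqref{condition}, the pair $(x,y)$ defined by \eqref{eq-x}--\eqref{eq-y} is a rational point on $\CN$ (this is precisely what birationality guarantees, and in particular $y^2=B$ forces $\sqrt{B}\in\Q$), and it satisfies $0<x<1$; Theorem~\ref{th:1} then furnishes the triangle with the stated side lengths, so the two criteria match together with their explicit formulas.

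It remains to insert the word \emph{non-torsion}, and this is the only step that requires genuine checking. By the analysis of Section~\ref{sec-torsion} the torsion subgroup of $\mathcal{E}_N$ consists of $T_2=(0,0)$, $T_3^{\pm}=(1,\pm2N)$, $T_6^{\pm}=(1-4N,\pm2N(4N-1))$, together with the two extra points of order~$2$, namely $\bigl(1-2N(N+1)\pm2NM,0\bigr)$, in the case $N(N+2)=M^2$. I would show that none of these satisfies \eqref{condition}: for $T_2$ the expression \eqref{eq-x} equals $0$; for $T_3^{\pm}$ and $T_6^{\pm}$ the denominator $(u-1)(u+4N-1)$ vanishes, so \eqref{eq-x} is undefined; and for the two extra order-$2$ points a direct substitution of $v=0$ into \eqref{eq-x}, using $M^2=N(N+2)$ to simplify both numerator and denominator, gives $x=2$. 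In each case $x\notin(0,1)$, so every rational point obeying \eqref{condition} is automatically of infinite order, which is exactly the asserted equivalence.

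The step I expect to be the main obstacle is this last one, the verification that the extra $2$-torsion points fail \eqref{condition}: unlike $T_2$, $T_3^{\pm}$, $T_6^{\pm}$, these points do lie in the domain of \eqref{eq-x}, and only the algebraic identity $x=2$ (obtained after eliminating the surd $M=\sqrt{N(N+2)}$) rules them out. All the remaining work is the bookkeeping of composing two explicit rational maps and is routine once that single identity is in hand.
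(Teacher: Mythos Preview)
Your proposal is correct and follows exactly the route the paper takes: the paper offers no separate argument for Theorem~\ref{th:2} beyond the sentence ``We can carry over the previous result from $\CN$ to the elliptic curve $\mathcal{E}_N$,'' i.e.\ it simply composes Theorem~\ref{th:1} with the birational maps of Section~\ref{sec-2}, relying on Section~\ref{sec-torsion} for the fact that $T_2,T_3^{\pm},T_6^{\pm}$ do not yield triangles.

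Your treatment actually goes further than the paper on one point: you check the two additional order-$2$ points $\bigl(1-2N(N+1)\pm 2NM,0\bigr)$ that appear when $N(N+2)=M^2$, and your computation $x=2$ for these is correct (indeed $(u-1)(u+4N-1)=-4N^2u$ here, so $x=-8N^2u/(-4N^2u)=2$). The paper never addresses this case explicitly, so your write-up closes a small gap rather than introducing one.
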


\begin{exa}
	For $N=1$ the rank of $\mathcal{E}_N$ is zero which does not lead to a triangle. For $N=3$, the rank of $\mathcal{E}_N$ is $1$, being generated by $P=(-44,66)$. Since the coordinates of the point $P$ do not satisfy the conditions of Theorem~\ref{th:2}, there is no  triangle corresponding to this point. However, by using the point $[2]P=(3481/16, -226029/64)$, we see that its coordinates satisfy the conditions of Theorem~\ref{th:2} and hence we get (after scaling)
	$$f=98315, \ g=55696, \ h=52371.$$
	We note that there are more triangles for $N=3$. For example, the points 
    \begin{multline*}
    	[4]P=({16322076723481}/{363300329536},\\ -{93684940203017164611}/{218977093825846784})
    	\end{multline*}
    and $$-P-T_3^+=(-{11}/{9}, {242}/{27}),$$
    respectively lead to the triangles
    \begin{multline*}
    	(f,g,h)=(46822120411340669769, 39352135250471327456, \\ 15634506390670773305), 
    \end{multline*}
    and
    $$(f,g,h)=(25, 27, 8).$$
    Note that he point $P+T_6^+=(9, -66)$ also gives the latter triangle.
\end{exa}

\begin{exa} \label{exam2}
	Among the integer numbers $1\leq N\leq 50$, the values 
	$$
	\begin{aligned}
		& 3,5,8,10,11,13,15,17,23,24,26,27,29,31,32,33,34,35,36,37,38,39,41,42,43,\\
		& 46,48,50
	\end{aligned}
	$$ 
	lead to positive rank elliptic curves $\mathcal{E}_N$. 
If we want to find just one triangle for one of these values of $N$, we can proceed as follows.  
Let $P$ be a generator of $\mathcal{E}_N$, and $u(P), v(P)$ its $u$- and $v$-coordinate.
	\begin{itemize}
		\item  If $u(P)v(P)<0$, we use one of the points $P+T_6^+$ or $-P-T_6^+$ depending on $u(P+T_6^+)v(P+T_6^+)$ is negative or positive respectively.
		
		\item If $u(P)v(P)>0$, we use one of the points $-P+T_6^+$ or $P-T_6^+$ depending on $u(-P+T_6^+)v(-P+T_6^+)$ is negative or positive respectively. 
	\end{itemize}
	Using the formulas in Theorem~\ref{th:2} we then get the triangles exhibited in Table~\ref{tab:1}. 
\begin{longtable}[c]{cccc}
		\caption{Examples for $1\leq N\leq 50$}\label{tab:1}
	\label{rank1,tor26} 
	\\ \hline 
	$N$ & $f$ & $g$ & $h$ \\   \hline 
	&&& \\ [-10pt]
$3$ & $25$ & $27$ & $8$ \\  [2.5pt]
$5$ & $121$ & $147$ & $40$ \\  [2.5pt]
$8$ & $49$ & $50$ & $6$ \\  [2.5pt]
$10$ & $121$ & $128$ & $15$ \\ [2.5pt]
$11$ & $471969$ & $591976$ & $142885$ \\ [2.5pt]
$13$ & $24037$ & $35000$ & $11913$ \\ [2.5pt]
$15$ & $243$ & $245$ & $16$ \\ [2.5pt]
$17$ & $4107$ & $4205$ & $272$ \\ [2.5pt]
$23$ & $363$ & $368$ & $17$ \\ [2.5pt]
$24$ & $242$ & $243$ & $10$ \\ [2.5pt]
$26$ & $1568$ & $1587$ & $65$ \\ [2.5pt]
$27$ & $24900840$ & $26234439$ & $1866059$ \\ [2.5pt]
$29$ & $256$ & $261$ & $11$ \\ [2.5pt]
$31$ & $130355$ & $126736$ & $6171$ \\ [2.5pt]
$32$ & $84568$ & $73947$ &  $11830$ \\ [2.5pt]
$33$ & $1323$ & $1352$ & $55$ \\ [2.5pt]
$34$ & $529$ & $640$ & $119$ \\ [2.5pt]
$35$ & $847$ & $845$ & $24$ \\ [2.5pt]
$36$ & $5043$ & $5415$ & $448$ \\ [2.5pt]
$37$ & $31423$ & $31205$ & $888$ \\ [2.5pt]
$38$ & $150544$ & $164331$ &  $15895$ \\ [2.5pt]
$39$ & $116467264$ & $143721781$ & $28780245$ \\ [2.5pt]
$41$ & $158251147734128961$ & $179454792712801424$ & $23209487182638905$ \\ [2.5pt]
$42$ & $841$ & $864$ & $35$ \\ [2.5pt]
$43$ & $2989137$ & $2805275$ & $219128$ \\ [2.5pt]
$46$ & $18910493440839$ & $18598307793911$ & $571951808000$ \\ [2.5pt]
$48$ & $675$ & $676$ & $14$ \\ [2.5pt]
$50$ & $2401$ & $2535$ & $160$ \\ 
	\bottomrule
\end{longtable}
	 
\end{exa}
In Section~\ref{sec:triangles} we intend to find not only one triangle, but an infinite family of triangles for a given rational value of $N=R/r$ for which the rank of $\mathcal{E}_N$ is positive.
To do so, we first notice that the condition~\eqref{condition} in Theorem~\ref{th:2} can be transformed into a nicer form. 
\begin{thm}
	\label{th:3}
	A rational triangle with rational quotient $R/r=N>1/4$ exists  if and only if $\mathcal{E}_N$ has a rational non-torsion point  $(u,v)$ such that 
	\begin{equation}\label{eq-13'}1-4N < u < 0 \quad \text{or} \quad u > 1.\end{equation}
	In this case, the sides are given by
	$$f=\frac{s}{2x}(A_1-\sqrt{B}), \ g=sx, \ h=\frac{s}{2x}(A_2+\sqrt{B}),$$
	where $$x=-\frac{4N(v+2Nu)}{(u-1)(u+4N-1)}$$ and $0<s\in\Q$.
\end{thm}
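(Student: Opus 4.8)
The plan is to deduce Theorem~\ref{th:3} directly from Theorem~\ref{th:2} by showing that, for non-torsion rational points, the condition $0<x<1$ appearing there is equivalent to the condition $1-4N<u<0$ or $u>1$. Throughout I write $D:=(u-1)(u+4N-1)$ and $x=-4N(v+2Nu)/D$. Since $\mathcal{E}_N$ is symmetric under $(u,v)\mapsto(u,-v)$, I would fix a value of $u$ and compare the two $x$-coordinates $x_{+}$ and $x_{-}$ attached to the two points $(u,\pm v)$ lying above it.

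The heart of the argument is the computation of the elementary symmetric functions of $x_{+}$ and $x_{-}$. A direct calculation gives $x_{+}+x_{-}=-16N^2u/D$, while the product involves $(2Nu)^2-v^2=4N^2u^2-v^2$. Using the defining equation $v^2=u^3+2(2N^2+2N-1)u^2-(4N-1)u$ of $\mathcal{E}_N$, one checks the factorization $4N^2u^2-v^2=-u(u-1)(u+4N-1)=-uD$, so that $x_{+}x_{-}=-16N^2u/D$ as well. Thus $x_{+}$ and $x_{-}$ are precisely the two roots of $q(t):=t^2-St+S$ with $S:=-16N^2u/D$. Verifying this factorization is the one genuinely computational step, and it is the step I expect to be the main obstacle; everything afterwards is sign-chasing.

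Now $q(1)=1>0$ while $q(0)=S$. Hence if $S<0$ the quadratic $q$ changes sign on $(0,1)$ and, its two roots straddling $0$, exactly one of $x_{\pm}$ lies in $(0,1)$. If instead $S\ge 0$, then no root lies in $(0,1)$: for $S=0$ the double root is $0$; for $S\ge 4$ both roots exceed $1$, since the vertex $S/2\ge 2$ lies to the right of $1$ and $q(1)>0$ forces the smaller root to be larger than $1$; and the range $0<S<4$ cannot occur for a genuine point because $(x_{+}-x_{-})^2=S^2-4S\ge 0$ already rules it out. Consequently, some point $(u,\pm v)$ satisfies $0<x<1$ if and only if $S<0$, and any single given point with $0<x<1$ already forces $S<0$.

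Finally, since $N>1/4$ ensures $1-4N<0<1$, a sign analysis of $S=-16N^2u/D$ across the critical values $u=1-4N,\,0,\,1$ shows that $S<0$ holds exactly when $u/D>0$, that is, exactly when $1-4N<u<0$ or $u>1$. To conclude, I would combine this with Theorem~\ref{th:2}: negation on $\mathcal{E}_N$ preserves rationality and sends non-torsion points to non-torsion points, so the existence of a suitable sign of $v$ is a property of $u$ alone. Therefore a non-torsion rational point with $0<x<1$ exists if and only if a non-torsion rational point with $1-4N<u<0$ or $u>1$ exists, and the formulas for $f,g,h$ carry over verbatim from Theorem~\ref{th:2}.
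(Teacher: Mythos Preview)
Your proof is correct, and it takes a genuinely different route from the paper's.

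The paper argues geometrically: it identifies the boundary of the region $0<x<1$ as the two curves $c_1:\,v=-2Nu$ and $c_2:\,v=-\bigl(u^2+2(4N^2+2N-1)u-(4N-1)\bigr)/(4N)$, observes that $\mathcal{E}_N$ meets $c_1\cup c_2$ only in the torsion points $T_2,T_3^-,T_6^-$, and then reads off from the picture which arcs of $\mathcal{E}_N$ lie in the region.

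Your approach is purely algebraic. By pairing the two points $(u,\pm v)$ over a given $u$, you compute the sum and product of the corresponding $x$-values and discover the pleasant identity $x_{+}+x_{-}=x_{+}x_{-}=-16N^2u/D=:S$. The problem then collapses to analysing when the quadratic $t^2-St+S$ has a root in $(0,1)$, which (using $q(0)=S$, $q(1)=1$, and the nonnegativity of the discriminant for real points) is precisely when $S<0$; a sign table for $u/D$ finishes the job. The factorisation $4N^2u^2-v^2=-u(u-1)(u+4N-1)$ is indeed the one computational check, and it is correct.

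Each approach has its virtues. The paper's picture explains \emph{which} sign of $v$ actually produces the triangle ($v>0$ on the oval arc $1-4N<u<0$, $v<0$ on the branch $u>1$), something your proof establishes only implicitly via ``pick the correct root of $q$.'' Conversely, your argument is more self-contained and does not rely on reading intersections off a figure; in particular, the fact that $\mathcal{E}_N$ crosses the boundary of the region only at torsion points is here an algebraic consequence rather than a visual one. Your closing remark, that negation preserves rationality and infinite order so one may freely pass from $(u,v)$ to $(u,-v)$, is exactly what is needed to reconcile the existence statement in Theorem~\ref{th:2} with the $u$-only condition in Theorem~\ref{th:3}.
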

\begin{proof}
	We first note that the curve $\mathcal{E}_N$ intersects $u$-axis in the points
	$$u_1=0, \ u_2=-2N^2-2N+1-2N\sqrt{N(N+2)}, \ u_3=-2N^2-2N+1+2N\sqrt{N(N+2)},$$
see Figure~\ref{fig2}.
	According to~\eqref{eq-x}, we observe that if $x=0$, then $v=-2Nu$ and if $x=1$, then 
	$$v=-\frac{u^2+2(4N^2+2N-1)u-(4N-1)}{4N}.$$ 
Hence the region of points $(u,v)$ satisfying condition~\eqref{condition} is the green region
between the two curves
\begin{align*}
c_1:\ v&= -2Nu,\\	
c_2:\ v&=	-\frac{u^2+2(4N^2+2N-1)u-(4N-1)}{4N}.
\end{align*}
The three curves $c_1,c_2$, and $\mathcal{E}_N$ meet in the torsion points
$$
T_6^-=(1 - 4N, -2N (1 - 4N))\text{ and } T_3^-=(1, -2N), \text{ an inflection point of $\mathcal{E}_N$}.
$$
The curves $c_2$ and $\mathcal{E}_N$ actually have a common tangent in $T_6^-$. Moreover, $c_1$ and $\mathcal{E}_N$ meet in $T_2=(0,0)$.
Hence $\mathcal{E}_N$ crosses the boundary of the green region only in $T_6^-,T_3^-$, and in the origin.
It follows that exactly the points $(u,v)$ on $\mathcal{E}_N$ with $1 - 4n<u<1$ or $u>1$ satisfy condition~\eqref{condition}.
	\begin{figure}[htbp]
		\begin{center}
\begin{tikzpicture}[line cap=round,line join=round,x=30,y=22]
\begin{axis}[
    axis line style={draw=none},
    ticks=none,
    xmin=-5.55, xmax=2.8,
    ymin=-4, ymax=4.28]

\addplot [blue,name path = A,
    line width=.8pt,
    domain = -5.55:2.8,
    samples = 10] {-1.4*x} 
    node [very near end, right] {};
    
\addplot [blue,name path = B,
        line width=.8pt,
    domain =  -5.55:2.8,
    samples=100] {-0.3571*x*x - 1.6857*x +0.6429} 
    node [pos=1, above] {};
    
    \addplot [darkgreen!20] fill between [of = A and B, soft clip={domain=-5.55:2.8}];

\addplot [red,
        line width=.8pt,
    domain = -3.3046:0,
    samples=100] {sqrt(x*x*x + 2.76*x*x - 1.8*x)} 
    node [pos=1, above] {};
    
    \addplot [red,
        line width=.8pt,
    domain = -3.3046:0,
    samples=100] {-sqrt(x*x*x + 2.76*x*x - 1.8*x)} 
    node [pos=1, above] {};

\addplot [red,
        line width=.8pt,
    domain = .5447:3,
    samples=100] {sqrt(x*x*x + 2.76*x*x - 1.8*x)} 
    node [pos=1, above] {};
    
    \addplot [red,
        line width=.8pt,
    domain = .5447:3,
    samples=100] {-sqrt(x*x*x + 2.76*x*x - 1.8*x)} 
    node [pos=1, above] {};
    
    \addplot[->, line width=.5pt] coordinates {(-5.55,0) (2.8,0)};
        \addplot[->, line width=.5pt] coordinates {(0,-4) (0,4.28)};
\draw[fill=white] (axis cs:0, 0) circle (1.5pt) node[below,xshift=-8pt,font=\small,yshift=2pt]{$u_1$};
\draw [fill=white] (axis cs:-3.3046817,0) circle (1.5pt) node[anchor=north west,yshift=2pt,xshift=-2pt,font=\small] {$u_2$};
\draw [fill=white] (axis cs:0.5446817918814528,0) circle (1.5pt) node[anchor=north west,yshift=2pt,font=\small] {$u_3$};

\draw [fill=white] (axis cs:-1.8,2.52) circle (1.5pt)  node[font=\small,above,xshift=6.3pt,yshift=-3pt] {$T_6^-$};
\draw [fill=white] (axis cs:1.,-1.4) circle (1.5pt) node[font=\small,right,yshift=1pt,xshift=0pt] {$T_3^-$};
\draw [fill=white] (axis cs:-5.074700720153143,0.) circle (1.5pt)  node {};
\node[red,font=\small] at (axis cs:2.3,3.6){$\mathcal{E}_N$};
\node[blue,font=\small] at (axis cs:-2.3,3.7){$c_1$};
\node[blue,font=\small] at (axis cs:-4.12,1){$c_2$};

\end{axis}
\end{tikzpicture}
		\caption{Only the points on $\mathcal{E}_N$ inside the green region satisfy condition~\eqref{condition}.}
		\label{fig2}
		\end{center}
	\end{figure}
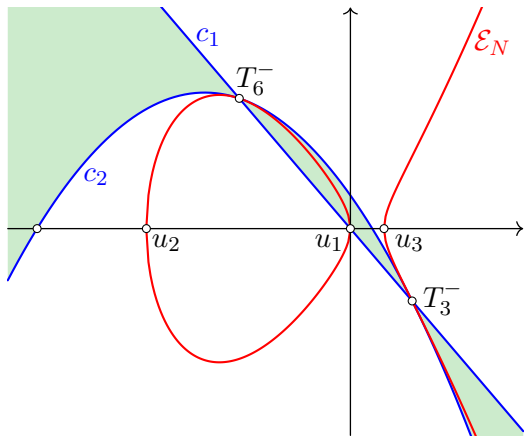
\end{proof}

\section{The cases $N=m^2\pm 1$}\label{sec-3}
The previous section suggests that if $N>1/4$ is a positive rational number for which the rank of $\mathcal{E}_N$ is positive, we may be able to construct integer triangles with $R/r=N$.
So we are facing two problems: Find  rational numbers $N>1/4$ for which the rank of $\mathcal{E}_N$ is positive, and find rational points on such
curves that satisfy condition~\eqref{eq-13'}.
In Section~\ref{sec:triangles} we will show that whenever $\mathcal{E}_N$ has positive rank, the curve has infinitely many rational points satisfying condition~\eqref{eq-13'},
and that these points lead to infinitely many non-similar integer triangles with $R/r=N$. 
In this section we first show that for each $N=m^2\pm 1>1/4$, $m\in\Q$, the rank of $\mathcal{E}_N$ is positive.
We provide explicit formulas for the sides $f,g,h$ of integer triangles with $R/r=N\in\Q$.   

\begin{thm}
    \label{th:4}
    \begin{itemize}
    	\item [(i)] If $N=m^2+1$ for a rational number $m>1$ the rank of $\mathcal{E}_N$ is positive with a non-torsion point 
    	$$P=\Bigl(\frac{1}{m^2}, \frac{3m^2+1}{m^3}\Bigr)$$
    	such that the point 
    	\begin{multline*}
    		-P-T_6^+=\Bigl(-\frac{(4m^2+3)(m+1)^2}{(2m^2-m+1)^2}, \\ \frac{2(m+1)(4m^2+3)(m^2+1)(2m^3+m^2+2m-1)}{(2m^2-m+1)^3}\Bigr)
    	\end{multline*}
    	leads to the triangle 
    	$$f=(m-1)(2m^2+m+1)^2, \ g=(m+1)(2m^2-m+1)^2, \ h=4m(m^2+1).$$
    	
    	\item[(ii)] If $N=m^2-1>1/4$  for a rational number $m>1$ the rank of $\mathcal{E}_N$ is positive with a non-torsion point 
    	$$P=\Bigl(\frac{1}{m^2}, \frac{m^2-1}{m^3}\Bigr)$$
    	such that the point
    	$$-P-T_6^+=\Bigl(-\frac{4m^2-5}{(2m-1)^2}, \frac{2(m-1)(4m^2-5)(2m^2+m+1)}{(2m-1)^3}\Bigr)$$
    	 leads to the triangle 
    	$$f=(m-1)(2m+1)^2, \ g=(m+1)(2m-1)^2, \ h=4m.$$
    \end{itemize}    
\end{thm}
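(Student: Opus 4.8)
The plan is to treat both parts uniformly: first show that the displayed point $P$ lies on $\mathcal{E}_N$ and has infinite order, which already gives positive rank, and then compute $-P-T_6^+$ by the group law and run it through Theorem~\ref{th:3} to recover the stated triangle.

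The first step is a direct substitution. Plugging $P=(1/m^2,(3m^2+1)/m^3)$ into $v^2=u^3+2(2N^2+2N-1)u^2-(4N-1)u$ with $N=m^2+1$ (respectively $P=(1/m^2,(m^2-1)/m^3)$ with $N=m^2-1$) and clearing the denominator $m^6$ reduces the curve equation to a polynomial identity in $m$, which holds identically. For non-torsion I would use the complete classification in the Proposition: the torsion group is $\Z/6\Z$ or $\Z/2\Z\times\Z/6\Z$, hence every element has order $1$, $2$, $3$, or $6$. Now $v(P)\neq0$ for $m>1$, so $P$ is not $2$-torsion; the full $3$-torsion is $\{O,T_3^\pm\}$ with $u$-coordinate $1$, while $u(P)=1/m^2\neq1$, so $P$ is not $3$-torsion; and $P$ has order $6$ only if $[2]P$ is $3$-torsion, i.e. only if $u([2]P)=(u^2+4N-1)^2/(4v^2)=1$, which a short computation rules out for $m>1$. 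Thus $P$ has infinite order and $\mathcal{E}_N$ has positive rank. (Alternatively, once the triangle below is produced one may argue backwards: a genuine triangle forces, by Theorem~\ref{th:3}, the corresponding point to be non-torsion.)

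The remaining step is the explicit group-law computation. With $T_6^+=(1-4N,2N(4N-1))$ and $-P=(1/m^2,-v(P))$, I would apply the chord--tangent addition formula to $-P$ and $T_6^-=-T_6^+$ to obtain $-P-T_6^+$; factoring the resulting rational functions of $m$ produces the displayed coordinates, whose denominators are $(2m^2-m+1)^2$ in part~(i) and $(2m-1)^2$ in part~(ii). I would then verify that the $u$-coordinate of $-P-T_6^+$ satisfies $1-4N<u<0$, so that condition~\eqref{eq-13'} holds, compute $x=-4N(v+2Nu)/((u-1)(u+4N-1))$ via~\eqref{eq-x}, and substitute into the side formulas of Theorem~\ref{th:3}; after cancelling the common factor the sides collapse, up to the free scaling $s$, to $f=(m-1)(2m^2+m+1)^2,\ g=(m+1)(2m^2-m+1)^2,\ h=4m(m^2+1)$ in part~(i) and to $f=(m-1)(2m+1)^2,\ g=(m+1)(2m-1)^2,\ h=4m$ in part~(ii).

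The main obstacle is entirely computational rather than conceptual: the addition of $-P$ and $T_6^-$ and the back-substitution through~\eqref{eq-x} and the side formulas generate large rational expressions in $m$, so the factorizations that yield the clean closed forms are best carried out and independently verified with a computer algebra system. A convenient numerical sanity check is $m=2$, where part~(i) gives $(f,g,h)=(121,147,40)$ with $N=5$ and part~(ii) gives $(f,g,h)=(25,27,8)$ with $N=3$, matching the entries in Table~\ref{tab:1}.
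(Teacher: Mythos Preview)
Your proposal is correct and follows essentially the same route as the paper: verify that $P$ lies on $\mathcal{E}_N$, compute $-P-T_6^+$ via the group law, check condition~\eqref{eq-13'}, and then read off the triangle sides from Theorem~\ref{th:3}. The only difference is that you supply an explicit argument for $P$ being non-torsion (using the torsion classification from the Proposition), whereas the paper simply asserts this; your alternative remark---that the existence of a genuine triangle forces the point to be non-torsion by Theorem~\ref{th:3}---is also a clean way to close that gap.
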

\begin{proof}
(i) By enforcing $u=N-1$ on $\mathcal{E}_N$, the expression $N-1$ must be a square, say $m^2$ for some rational number $m$. 
Then, $N=m^2+1$ and the corresponding elliptic curve $\mathcal{E}_N$ owns the rational non-torsion  point
$$P=\Bigl(\frac{1}{m^2}, \frac{3m^2+1}{m^3}\Bigr).$$
By the addition law, we compute 
\begin{multline*}
	-P-T_6^+=\Bigl(-\frac{(4m^2+3)(m+1)^2}{(2m^2-m+1)^2}, \\ \frac{2(m+1)(4m^2+3)(m^2+1)(2m^3+m^2+2m-1)}{(2m^2-m+1)^3}\Bigr).
\end{multline*}
By taking 
$(u,v)=-P-T_6^+$
we observe that condition~\eqref{eq-13'} holds, hence by Theorem~\ref{th:1} we get 
$$f=(m-1)(2m^2+m+1)^2, \ g=(m+1)(2m^2-m+1)^2, \ h=4m(m^2+1).$$
(ii) In a similar fashion, by enforcing $u=N+1$ on $\mathcal{E}_N$, the expression $N+1$ must be a square, say $m^2$ for some rational number $m$. 
Then, $N=m^2-1$ and the corresponding elliptic curve $\mathcal{E}_N$ owns the rational non-torsion  point
$$P=\Bigl(\frac{1}{m^2}, \frac{m^2-1}{m^3}\Bigr).$$
By the addition law, we compute 
$$-P-T_6^+=\Bigl(-\frac{4m^2-5}{(2m-1)^2}, \frac{2(m-1)(4m^2-5)(2m^2+m+1)}{(2m-1)^3}\Bigr).$$
By taking 
$(u,v)=-P-T_6^+$
we observe that  condition~\eqref{eq-13'} holds, hence by Theorem~\ref{th:1} we get 
$$f=(m-1)(2m+1)^2, \ g=(m+1)(2m-1)^2, \ h=4m.$$
\end{proof}

\section{Infinitely many triangles for curves of positive rank}\label{sec:triangles}
We start by determining just one rational point on $\mathcal{E}_N$ that fulfills condition~(\ref{eq-13'}).
This point will then be the stepping stone for an infinite number of such points.
\begin{lem}\label{lem:1a}
If $\mathcal{E}_N$ has positive rank for a rational number $N>1/4$, then there are rational points on $\mathcal{E}_N$ that satisfy condition~\eqref{eq-13'}.
\end{lem}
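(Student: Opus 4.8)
The plan is to exploit the real topology of $\mathcal{E}_N(\R)$ together with the density of the multiples of a single non-torsion point. First I would record the shape of the real locus. Writing the right-hand side of $\mathcal{E}_N$ as $u\bigl(u^2+2(2N^2+2N-1)u-(4N-1)\bigr)$, its three real roots are the numbers $u_2<u_1=0<u_3$ already computed in the proof of Theorem~\ref{th:3}; indeed the product of the two nonzero roots equals $-(4N-1)<0$ for $N>1/4$, which forces them to straddle $0$. Hence $\mathcal{E}_N(\R)$ has two connected components: a bounded oval over $[u_2,0]$ and an unbounded branch over $[u_3,\infty)$, the latter together with the point at infinity $O$ being the identity component $\mathcal{E}_N(\R)^0$, which is topologically a circle. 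Since the cubic takes the value $4N^2>0$ at $u=1$ and $1>0$, the value $u=1$ lies on the unbounded branch, so $u_3<1$. Consequently the set $\{(u,v)\in\mathcal{E}_N(\R)^0 : u>1\}$ is a nonempty open arc of the identity component, and every point of this arc satisfies condition~\eqref{eq-13'}.

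Next I would produce a non-torsion point lying on the identity component. Because the rank is positive, $\mathcal{E}_N(\Q)$ contains a point $Q$ of infinite order. If $Q$ already lies on $\mathcal{E}_N(\R)^0$ I set $P=Q$; otherwise $Q$ lies on the oval, whence $[2]Q$ lies on the identity component and is again of infinite order, and I set $P=[2]Q$. Fixing a group isomorphism $\mathcal{E}_N(\R)^0\cong\R/\Z$, let $\theta\in\R/\Z$ be the image of $P$. The torsion of $\R/\Z$ is exactly $\Q/\Z$, so $P$ being non-torsion forces $\theta$ to be irrational.

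Finally I would invoke Kronecker's theorem: for irrational $\theta$ the sequence $\{n\theta \bmod 1 : n\in\Z\}$ is dense in $\R/\Z$, so the rational points $[n]P$ are dense in $\mathcal{E}_N(\R)^0$. Since $\{u>1\}$ is a nonempty open arc of $\mathcal{E}_N(\R)^0$, some $[n]P$ lands inside it and hence satisfies condition~\eqref{eq-13'}, proving the lemma. The main obstacle is the density input, where one must be careful that non-torsion really corresponds to an irrational angle and that the target arc $\{u>1\}$ is genuinely open and nonempty inside the identity component; the remaining work is only the elementary bookkeeping of which arcs of $\mathcal{E}_N(\R)$ meet the good region. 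Equivalently, one may avoid the explicit circle parametrization and argue that the closure of $\mathcal{E}_N(\Q)$ in the compact real Lie group $\mathcal{E}_N(\R)\cong S^1\times\Z/2\Z$ is a closed infinite subgroup and hence contains the identity component $\mathcal{E}_N(\R)^0$.
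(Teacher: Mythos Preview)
Your argument is correct, but it is a genuinely different route from the paper's proof. The paper proceeds constructively: starting from a non-torsion point $R=(u_R,v_R)$ that fails~\eqref{eq-13'}, it shows by an explicit case analysis that adding a specific torsion point moves $R$ into the good region (namely $R+T_3^-$ works when $u_R<1-4N$, and $-(R+T_6^-)$ works when $0<u_R<1$). This requires only the addition law and the geometry of Figure~\ref{fig:3}, and it hands you a concrete point, which is then reused in the subsequent Lemmas~\ref{lem:4} and~\ref{lll}.

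Your approach is topological rather than constructive: you pass to the identity component $\mathcal{E}_N(\R)^0\cong\R/\Z$, observe that the arc $\{u>1\}$ is open and nonempty there because the cubic equals $4N^2>0$ at $u=1$, and then invoke Kronecker density for the multiples of a non-torsion point. This is a standard and clean technique, and in fact it proves more than the lemma asks: it yields \emph{infinitely many} rational points with $u>1$, so it essentially subsumes Lemmas~\ref{lem:4} and~\ref{lll} in one stroke. The trade-off is that you lose explicitness (you do not know which multiple $[n]P$ works), whereas the paper's torsion-translation trick is tailored to producing specific points and feeds directly into the worked examples.
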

\begin{proof}
Let $R=(u_R,v_R)$ be a rational non-torsion point on $\mathcal{E}_N$ that does not satisfy condition~\eqref{eq-13'}.
If $u_R<1-4n$, then $R+T_3^-$ is a rational non-torison point satisfying condition~\eqref{eq-13'}.
If $0<u_R<1$, then $-(R+T_6^-)$ is a rational non-torison point satisfying condition~\eqref{eq-13'}.
See Figure~\ref{fig:3}.
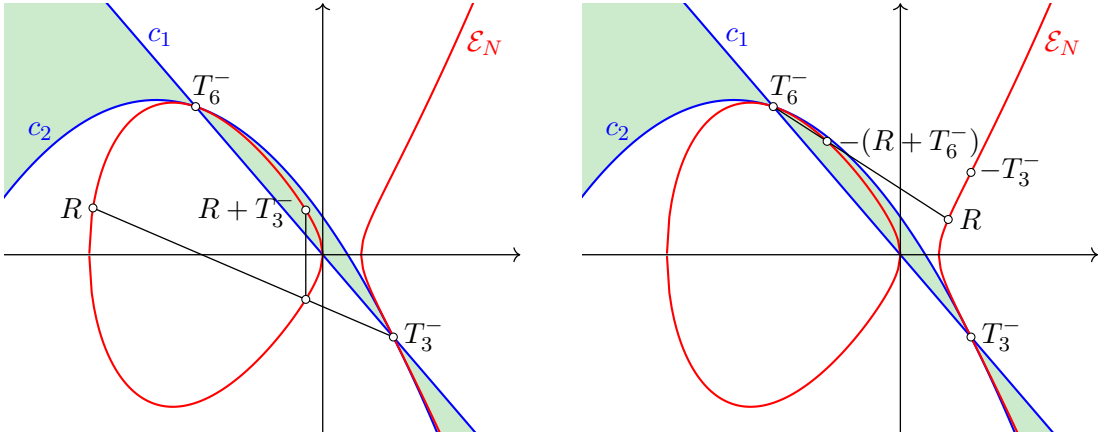
\begin{figure}[h!]
\begin{center}
\begin{tikzpicture}[line cap=round,line join=round,x=30,y=22]
\begin{axis}[
    axis line style={draw=none},
    ticks=none,
    xmin=-4.5, xmax=2.8,
    ymin=-3, ymax=4.28]

\addplot [blue,name path = A,
    line width=.8pt,
    domain = -5.55:2.8,
    samples = 10] {-1.4*x} 
    node [very near end, right] {};
    
\addplot [blue,name path = B,
        line width=.8pt,
    domain =  -5.55:2.8,
    samples=100] {-0.3571*x*x - 1.6857*x +0.6429} 
    node [pos=1, above] {};
    
    \addplot [darkgreen!20] fill between [of = A and B, soft clip={domain=-5.55:2.8}];

\addplot [red,
        line width=.8pt,
    domain = -3.3046:0,
    samples=100] {sqrt(x*x*x + 2.76*x*x - 1.8*x)} 
    node [pos=1, above] {};
    
    \addplot [red,
        line width=.8pt,
    domain = -3.3046:0,
    samples=100] {-sqrt(x*x*x + 2.76*x*x - 1.8*x)} 
    node [pos=1, above] {};

\addplot [red,
        line width=.8pt,
    domain = .5447:3,
    samples=100] {sqrt(x*x*x + 2.76*x*x - 1.8*x)} 
    node [pos=1, above] {};
    
    \addplot [red,
        line width=.8pt,
    domain = .5447:3,
    samples=100] {-sqrt(x*x*x + 2.76*x*x - 1.8*x)} 
    node [pos=1, above] {};
    
    \addplot[->, line width=.5pt] coordinates {(-5.55,0) (2.8,0)};
    \addplot[->, line width=.5pt] coordinates {(0,-4) (0,4.28)};

\addplot [line width=0.5pt] coordinates {(-3.2534338076472844,0.7957808291336159)  (1.,-1.4)};
\addplot [line width=0.5pt] coordinates {(-0.24006532909130973,-0.759832126317231) (-0.24006532909130973,0.7598321263172303)};

\draw [fill=white] (axis cs:-1.8,2.52) circle (1.5pt)  node[font=\small,above,xshift=6.3pt,yshift=-3pt] {$T_6^-$};
\draw [fill=white] (axis cs:1.,-1.4) circle (1.5pt) node[font=\small,right,yshift=1pt,xshift=0pt] {$T_3^-$};

\draw [fill=white] (axis cs:-3.2534338076472844,0.7957808291336159) circle (1.5pt) node[font=\small,left,xshift=0pt,yshift=0pt] {$R$};
\draw [fill=white] (axis cs:-0.24006532909130973,-0.759832126317231) circle (1.5pt);
\draw [fill=white] (axis cs:-0.24006532909130973,0.7598321263172303) circle (1.5pt)  node[font=\small,left,xshift=0pt,yshift=0pt] {$R+T_3^-$};

\draw [fill=white] (axis cs:-5.074700720153143,0.) circle (1.5pt)  node {};
\node[red,font=\small] at (axis cs:2.3,3.6){$\mathcal{E}_N$};
\node[blue,font=\small] at (axis cs:-2.3,3.7){$c_1$};
\node[blue,font=\small] at (axis cs:-4,2.1){$c_2$};

\end{axis}
\end{tikzpicture}\qquad
\begin{tikzpicture}[line cap=round,line join=round,x=30,y=22]
\begin{axis}[
    axis line style={draw=none},
    ticks=none,
    xmin=-4.5, xmax=2.8,
    ymin=-3, ymax=4.28]

\addplot [blue,name path = A,
    line width=.8pt,
    domain = -5.55:2.8,
    samples = 10] {-1.4*x} 
    node [very near end, right] {};
    
\addplot [blue,name path = B,
        line width=.8pt,
    domain =  -5.55:2.8,
    samples=100] {-0.3571*x*x - 1.6857*x +0.6429} 
    node [pos=1, above] {};
    
    \addplot [darkgreen!20] fill between [of = A and B, soft clip={domain=-5.55:2.8}];

\addplot [red,
        line width=.8pt,
    domain = -3.3046:0,
    samples=100] {sqrt(x*x*x + 2.76*x*x - 1.8*x)} 
    node [pos=1, above] {};
    
    \addplot [red,
        line width=.8pt,
    domain = -3.3046:0,
    samples=100] {-sqrt(x*x*x + 2.76*x*x - 1.8*x)} 
    node [pos=1, above] {};

\addplot [red,
        line width=.8pt,
    domain = .5447:3,
    samples=100] {sqrt(x*x*x + 2.76*x*x - 1.8*x)} 
    node [pos=1, above] {};
    
    \addplot [red,
        line width=.8pt,
    domain = .5447:3,
    samples=100] {-sqrt(x*x*x + 2.76*x*x - 1.8*x)} 
    node [pos=1, above] {};
    
    \addplot[->, line width=.5pt] coordinates {(-5.55,0) (2.8,0)};
    \addplot[->, line width=.5pt] coordinates {(0,-4) (0,4.28)};

\addplot [line width=0.5pt] coordinates {(0.6778561097171775,0.5995963225719247) (-1.8,2.52)};

\draw [fill=white] (axis cs:-1.8,2.52) circle (1.5pt)  node[font=\small,above,xshift=6.3pt,yshift=-3pt] {$T_6^-$};
\draw [fill=white] (axis cs:1.,-1.4) circle (1.5pt) node[font=\small,right,yshift=1pt,xshift=0pt] {$T_3^-$};
\draw [fill=white] (axis cs:1.,1.4) circle (1.5pt) node[font=\small,right,yshift=1pt,xshift=-1pt] {$-T_3^-$};

\draw [fill=black]  circle (1.5pt);
\draw [fill=white] (axis cs:0.6778561097171775,0.5995963225719247) circle (1.5pt) node[font=\small,right,xshift=0pt,yshift=0pt] {$R$};
\draw [fill=white] (axis cs:-1.037190330047419,1.9288024362634044) circle (1.5pt)  node[font=\small,right,xshift=0pt,yshift=0pt] {$-(R+T_6^-)$};

\draw [fill=white] (axis cs:-5.074700720153143,0.) circle (1.5pt)  node {};
\node[red,font=\small] at (axis cs:2.3,3.6){$\mathcal{E}_N$};
\node[blue,font=\small] at (axis cs:-2.3,3.7){$c_1$};
\node[blue,font=\small] at (axis cs:-4,2.1){$c_2$};

\end{axis}
\end{tikzpicture}

\caption{Rational points on $\mathcal{E}_N$, satisfying condition~\eqref{eq-13'}.}\label{fig:3}
\end{center}
\end{figure}
\end{proof}
\begin{lem}\label{lem:4}
If $\mathcal{E}_N$ has positive rank for a rational number $N>1/4$, then there are rational points $R=(u_R,v_R)$ on $\mathcal{E}_N$ that satisfy condition~\eqref{eq-13'} and $u_R>1$.
\end{lem}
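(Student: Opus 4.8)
The plan is to exploit the real topology of $\mathcal{E}_N$ together with the group law. As recorded in the proof of Theorem~\ref{th:3}, for $N>1/4$ the cubic defining $\mathcal{E}_N$ has three real roots $u_2<u_1=0<u_3$, so $\mathcal{E}_N(\R)$ has two connected components: the bounded oval over $[u_2,0]$ (the ``egg'', which contains $T_2$ and $T_6^\pm$) and the unbounded branch over $[u_3,\infty)$ (which contains the neutral element $\mathcal{O}$ and $T_3^\pm$). The branch is the identity component, hence a subgroup of index $2$, and the egg is its nontrivial coset; in particular the sum of two egg points lands on the branch. Condition~\eqref{eq-13'} asks for $u>1$ (a branch condition) or $1-4N<u<0$ (an egg condition), and the point of this lemma is to land specifically in the branch part $u>1$, which Lemma~\ref{lem:1a} does not guarantee.

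First I would parametrize the branch by the real uniformization $\mathcal{E}_N(\R)^0\cong\R/\Z$, sending $\mathcal{O}$ to $0$. On this circle the coordinate $u$ is an even function of the parameter $t$, strictly decreasing from $+\infty$ at $t=0$ (the point $\mathcal{O}$) to $u_3$ at $t=\tfrac12$ (the real $2$-torsion point $(u_3,0)$). Since $T_3^\pm=(1,\pm 2N)=\pm T_3^+$ are the order-$3$ points on the branch, they sit at the parameters $t=\pm\tfrac13$ and both have $u=1$. By monotonicity, the affine branch points with $u>1$ are exactly those whose parameter lies in the nonempty open arc $(-\tfrac13,\tfrac13)\setminus\{0\}$ around $\mathcal{O}$.

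Next I would produce a single rational non-torsion point on the branch. Since the rank is positive, there is a rational point $P$ of infinite order; if $P$ already lies on the branch I set $Q=P$, and otherwise $P$ lies on the egg and I take $Q=2P$, which lies on the branch (egg\,$+$\,egg\,$=$\,branch) and is again of infinite order. A rational point of infinite order has infinite order in $\mathcal{E}_N(\R)$ as well, so its branch parameter $t_Q$ is irrational. By Weyl's equidistribution theorem the sequence $(n\,t_Q\bmod 1)_{n\ge 1}$ is equidistributed in $\R/\Z$, so infinitely many indices $n$ satisfy $n\,t_Q\bmod 1\in(-\tfrac13,\tfrac13)\setminus\{0\}$. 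For each such $n$ the point $R=nQ$ is a rational non-torsion point of $\mathcal{E}_N$ lying on the branch with $u_R>1$, which is exactly what is asserted (and the equidistribution gives infinitely many of them, as the section will need).

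The main obstacle is to make the geometric dictionary rigorous: that $u$ is strictly monotone in the uniformizing parameter between the two ``turning points'' $\mathcal{O}$ and $(u_3,0)$, and that $T_3^\pm$ really sit at $t=\pm\tfrac13$, so that $\{u>1\}$ is precisely the symmetric arc about $\mathcal{O}$ cut out by $T_3^+$ and $T_3^-$. These are standard features of a real elliptic curve with two real components, but they must be stated; once they are in place the equidistribution step is immediate. One could instead try to reach the branch from the egg point supplied by Lemma~\ref{lem:1a} by an explicit torsion translation or by doubling, but a single such move need not land in the arc $u>1$, whereas the equidistribution argument both forces $u>1$ and simultaneously yields the infinitude of points.
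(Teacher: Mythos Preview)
Your argument is correct: the reduction to the identity component via doubling, the identification of the arc $\{u>1\}$ with the parameter interval $(-\tfrac13,\tfrac13)\setminus\{0\}$ (using that the only real $3$-torsion points on the branch are $T_3^{\pm}$), and the equidistribution step all go through as stated. So you have a valid proof, and in fact you prove more than Lemma~\ref{lem:4} asks, since equidistribution already gives infinitely many such points --- this is the content of the paper's next lemma.

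The paper, however, takes a much shorter and completely elementary route. It simply invokes Lemma~\ref{lem:1a} to get a rational non-torsion point $R$ satisfying~\eqref{eq-13'}; if $u_R>1$ we are done, and otherwise $1-4N<u_R<0$, in which case the single torsion translation $R\mapsto R+T_6^{-}-T_3^{-}$ lands in $\{u>1\}$. The reason this specific shift works is that, using $2T_6^{+}=T_3^{-}$, one has $T_6^{-}-T_3^{-}=-3T_6^{+}=T_2$, so the map is just $R\mapsto R+T_2$. Addition of $T_2$ is a homeomorphism from the egg to the branch which sends the three boundary points $T_6^{+},T_6^{-},T_2$ of the egg green arc to $T_3^{+},T_3^{-},\mathcal{O}$, exactly the boundary points of $\{u>1\}$ on the branch; since $(u_2,0)\mapsto(u_3,0)$, the green arc is carried onto the arc $\{u>1\}$. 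Thus your caveat that ``a single such move need not land in the arc $u>1$'' is too cautious: the particular move $+T_2$ (equivalently $+T_6^{-}-T_3^{-}$) always works.

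In summary, the paper trades the analytic input (real uniformization and Weyl equidistribution) for one explicit torsion computation. Your approach costs more machinery but does not rely on Lemma~\ref{lem:1a} and simultaneously yields the infinitude of solutions; the paper's approach is shorter but postpones the infinitude to a separate iterative argument.
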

\begin{proof}
According to Lemma~\ref{lem:1a}, there are rational non-torsion points that satisfy condition~\eqref{eq-13'}.
Now, suppose $1-4N<u_R<0$, then $R+T_6^--T_3^-$ satisfies condition~\eqref{eq-13'} and the $u$-coordinate of this point is larger than $1$.
\end{proof}
\begin{lem}\label{lll}
If $\mathcal{E}_N$ has positive rank for a rational number $N>1/4$, then there are infinitely  many rational points $R=(u_R,v_R)$ on $\mathcal{E}_N$ that satisfy $u_R>1$.
\end{lem}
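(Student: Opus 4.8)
The plan is to exploit the topology of the real locus $\mathcal{E}_N(\R)$ together with the positivity of the rank, reducing the statement to the density of an irrational orbit on a circle. First I would describe $\mathcal{E}_N(\R)$ explicitly. Writing the cubic in~\eqref{9} as $v^2=u\bigl(u^2+2(2N^2+2N-1)u-(4N-1)\bigr)$, the quadratic factor has roots $u_2,u_3$ with product $u_2u_3=-(4N-1)<0$ for $N>1/4$, so they have opposite signs, $u_2<0<u_3$; and $u_3<1$ because $\sqrt{N(N+2)}<N+1$. Hence the three real roots are $u_2<0<u_3$ (with $u_1=0$), and $\mathcal{E}_N(\R)$ has two connected components: a bounded oval over $[u_2,0]$ and the unbounded branch over $[u_3,\infty)$ together with the point at infinity $\mathcal{O}$ (compare Figure~\ref{fig2}). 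The component carrying $\mathcal{O}$ is the identity component $\mathcal{E}_N(\R)^0$, a subgroup topologically isomorphic to $\R/\Z$; along it $u\to+\infty$ as the point tends to $\mathcal{O}$, so the set $U:=\{(u,v)\in\mathcal{E}_N(\R)^0 : u>1\}$ is a non-empty open arc surrounding $\mathcal{O}$.

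Next I would produce a non-torsion rational point on this component. By Lemma~\ref{lem:4} there is a rational point $P=(u_P,v_P)$ with $u_P>1$; since $u_P>1>u_3$, the point $P$ lies on $\mathcal{E}_N(\R)^0$. Moreover $P$ is automatically of infinite order: by the torsion description in Section~\ref{sec-torsion}, every torsion point has $u$-coordinate among $0,\,1,\,1-4N$, together with $1-2N(N+1)\pm 2NM$ when $N(N+2)=M^2$, and a direct check (again using $\sqrt{N(N+2)}<N+1$) shows that none of these exceeds $1$. Thus any rational point with $u>1$ is non-torsion, and in particular so is $P$.

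Finally I would run the equidistribution argument. Under the isomorphism $\mathcal{E}_N(\R)^0\cong\R/\Z$ the point $P$ corresponds to some angle $\theta$, and $[n]P$ corresponds to $n\theta$. Were $\theta$ rational, say $\theta=p/q$, then $[q]P=\mathcal{O}$ and $P$ would be torsion; hence $\theta$ is irrational, and by the Kronecker--Weyl theorem the sequence $\{\,n\theta \bmod 1 : n\ge 1\,\}$ is dense (indeed equidistributed) in $\R/\Z$. Consequently infinitely many multiples $[n]P$ fall into the non-empty open arc $U$, i.e.\ satisfy $u>1$; being distinct multiples of a non-torsion point they are pairwise distinct rational points, which proves the claim.

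The only genuinely non-formal ingredient is the passage $\mathcal{E}_N(\R)^0\cong\R/\Z$ and the resulting reduction to an irrational rotation, so I expect that to be the one step to state with care; identifying the correct real component (the unbounded branch, on which $u>1$ can occur) and checking that points with $u>1$ are non-torsion are both routine.
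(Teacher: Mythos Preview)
Your approach is genuinely different from the paper's and, modulo one error, sound. The paper argues constructively: starting from a non-torsion point $R_0$ with $u_0>1$ (supplied by Lemma~\ref{lem:4}), it observes that the map $R\mapsto -(2R+T_3^-)$ keeps one on the arc $\{u>1\}$ of the unbounded branch, producing an explicit infinite sequence $R_k=(-1)^k(2^kR_0+J_kT_3^-)$. Your density argument via $\mathcal{E}_N(\R)^0\cong\R/\Z$ and irrational rotation is more conceptual and transfers verbatim to any non-empty open arc on the identity component; the paper's recursion, by contrast, is elementary (no Lie-group structure or equidistribution needed) and hands you concrete points.

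The one genuine error is in your torsion bookkeeping. When $N(N+2)=M^2$ the torsion group is $\Z/2\Z\times\Z/6\Z$, which has twelve elements; the five $u$-values you list (together with $\mathcal{O}$) account for only eight of them. The four missing points are $(u_2,0)+T_3^\pm$ and $(u_3,0)+T_3^\pm$, and the latter pair can have $u>1$: for $N=2/3$ one computes $(5/9,0)+T_3^+=(5,-40/3)$. So the assertion ``any rational point with $u>1$ is non-torsion'' is false in general, and your deduction that $P$ has infinite order from $u_P>1$ alone breaks down. The fix is painless: the proof of Lemma~\ref{lem:4} already manufactures its point as a non-torsion point plus torsion, hence non-torsion; alternatively, replace $P$ by $[2]Q$ for any non-torsion $Q\in\mathcal{E}_N(\Q)$, which automatically lies on the identity component since $\mathcal{E}_N(\R)/\mathcal{E}_N(\R)^0$ has exponent~$2$. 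Either way your density argument then goes through unchanged.
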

\begin{proof}
According to Lemma~\ref{lem:4}, there are rational non-torsion points that satisfy condition~\eqref{eq-13'} and
whose $u$-coordinates are larger than $1$. If $R_0=(u_0,v_0)$ is such a point, then the point $-(2R_0+T_3^-)$ is also a
point that satisfy condition~\eqref{eq-13'} and whose $u$-coordinate is  larger than $1$. Iterating this process yields a sequence
$R_0, R_k=(-1)^k (2^k R_0+J_k T_3^-)$, where $(J_k)_k$ is the Jacobsthal  sequence modulo 3, i.e., $J_k\equiv -k \mod  3$. 
Recall that 
$T_3^-$ is a torsion point of order~$3$. In particular, the sequence $R_k$ is not periodic, and infinitely many non-similar
triangles result from the sequence of points $R_k$.
\end{proof}
As   a result, we obtain the following.
\begin{cor}
If $\mathcal{E}_N$ has positive rank for a rational number $N>1/4$, then there are infinitely many rational and integer triangles with $R/r=N$.
\end{cor}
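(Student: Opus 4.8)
The plan is to read the corollary off from Lemma~\ref{lll} together with the characterization in Theorem~\ref{th:3}, and then to promote the resulting rational triangles to an infinite family of pairwise non-similar integer triangles. First I would invoke Lemma~\ref{lll}: since $\mathcal{E}_N$ has positive rank, there is an infinite sequence of \emph{distinct} rational points $R_k=(u_k,v_k)$ on $\mathcal{E}_N$ with $u_k>1$. Each such point has $u_k>1$, so it meets condition~\eqref{eq-13'}; hence Theorem~\ref{th:3} applies and every $R_k$ produces a rational triangle with sides $f_k,g_k,h_k$ and $R/r=N$, where $g_k/s=x_k=-4N(v_k+2Nu_k)/\big((u_k-1)(u_k+4N-1)\big)$.

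The next step is to argue that these triangles represent infinitely many similarity classes, and not merely infinitely many scalings of a fixed shape. I would track the quantity $x=g/s=2g/(f+g+h)$, which is invariant under scaling and which, by the formulas of Theorem~\ref{th:1}, determines the whole shape $f:g:h$ of a triangle with $R/r=N$ once the labeling (with $h$ the side touched by the excircle) is fixed: given $x$, the polynomials $A_1,A_2,B$ are determined, $\sqrt{B}$ is the positive root, and the ratios $f:g:h$ follow. Thus two of our triangles are similar, with this labeling, if and only if they share the same value of $x$. Because~\eqref{eq-x} defines a non-constant rational function on the curve $\mathcal{E}_N$ (it is one coordinate of the birational correspondence between $\mathcal{E}_N$ and $\CN$ set up in Section~\ref{sec-2}), it has finite fibers. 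Consequently the infinite set of distinct points $\{R_k\}$ produces infinitely many distinct values $x_k$, and therefore infinitely many pairwise non-similar rational triangles with $R/r=N$.

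Finally I would clear denominators: for each $k$ choose $s\in\Q_{>0}$ so that $f_k,g_k,h_k$ become integers. This is permitted by the scaling freedom in Theorem~\ref{th:3} and changes neither $R/r$ nor the similarity class, so the infinite family of non-similar rational triangles becomes an infinite family of non-similar integer triangles, which gives the statement.

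The main obstacle is the non-similarity bookkeeping in the middle step, namely ensuring that the infinitely many points $R_k$ do not collapse onto finitely many triangle shapes. This is controlled by the finiteness of the fibers of~\eqref{eq-x} on the curve, combined with the non-periodicity of the sequence $(R_k)$ already established in the proof of Lemma~\ref{lll}. The only identifications one must still account for are the finitely many coming from the mirror symmetry $f\leftrightarrow g$ noted in Section~\ref{sec-2} and from translation by the torsion points, each of which merges only boundedly many points into a single geometric similarity class; reducing the count of distinct $x$-values by a bounded factor still leaves infinitely many classes.
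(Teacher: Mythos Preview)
Your argument is correct and follows the same route the paper intends: invoke Lemma~\ref{lll} to obtain infinitely many rational non-torsion points with $u>1$, feed them into Theorem~\ref{th:3} to get rational triangles with $R/r=N$, and then scale to integers. The paper in fact states the corollary without proof, treating it as an immediate consequence of Lemma~\ref{lll} (whose proof already asserts, without further justification, that ``infinitely many non-similar triangles result from the sequence of points $R_k$''). Your finite-fibers argument for the map~\eqref{eq-x} on $\mathcal{E}_N$ supplies precisely the missing justification for non-similarity that the paper leaves implicit, so your write-up is, if anything, more complete than the original.
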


\section{Poncelet's theorem}\label{sec:poncelet}
Our results now allow us to make an interesting statement with regard to Poncelet's closure theorem. 
\begin{cor}
Let $C$ and $E$ 
be two circles such that the ratio of their radii is rational and such that a triangle
with rational sides has  $C$ as circumcircle and  $E$ as one of its excircles. Then there exist infinitely
many rational non-similar triangles with rational sides that have  $C$ as circumcircle and  $E$ as excircle.
\end{cor}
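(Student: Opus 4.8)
The plan is to deduce the statement from the existence of infinitely many rational triangles with a fixed rational ratio $R/r$, and then to seat those triangles inside the prescribed pair of circles. Let $N=R/r$ be the ratio of the radius $R$ of $C$ to the radius $r$ of $E$; by hypothesis $N\in\Q$, and $N>1/4$ by \LEM~\ref{lem1}. The assumed rational triangle having $C$ as circumcircle and $E$ as an excircle is in particular a rational triangle with $R/r=N$, so by the construction of Section~\ref{sec-2} it yields a rational point on $\mathcal{E}_N$ meeting condition~\eqref{eq-13'}; since no torsion point of $\mathcal{E}_N$ gives a triangle (Section~\ref{sec-torsion}), this point is non-torsion and hence $\mathcal{E}_N$ has positive rank. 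The \COR{} at the end of Section~\ref{sec:triangles} then supplies infinitely many pairwise non-similar rational triangles $T_1,T_2,\dots$ with $R/r=N$.

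Next I would place each $T_k$ onto the given circles. Because every $T_k$ has the same ratio $R/r=N$, multiplying its sides by a suitable positive scalar $\lambda_k$ makes the relevant excircle have radius exactly $r$, and then the circumradius is exactly $R$. By Euler's relation~\eqref{euler} the distance between the circumcentre and that excentre equals $\sqrt{R(R+2r)}$, which is precisely the distance between the centres of $C$ and $E$; hence a rigid motion of the plane carries the rescaled $T_k$ to a triangle whose circumcircle is $C$ and whose excircle is $E$. (Equivalently, this is the content of Poncelet's closure theorem: once one triangle inscribed in $C$ with side-lines tangent to $E$ exists, the whole one-parameter family does.) Scaling and rigid motions preserve similarity classes, so the $T_k$ produce infinitely many pairwise non-similar triangles inscribed in $C$ with excircle $E$.

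The one delicate point --- and the main obstacle --- is that the scalars $\lambda_k$ must be rational, since otherwise the rescaled triangles would lose their rational sides. I would settle this through the following fact: for a fixed rational $N$, the square-free part of $R^2$ (equivalently of the squared area $\Delta^2$) is the same for every rational triangle with $R/r=N$. Granting it, the circumradii of $T_k$ and of the hypothesis triangle differ by a rational factor, so each $\lambda_k$ (chosen to reach the value $R$ realised by the hypothesis triangle) is rational and rational sides survive. To prove the fact I would invoke the formula obtained in the proof of \THM~\ref{th:1}, namely $r^2=\frac{s^2(A_3+\sqrt{B})(A_4+\sqrt{B})}{4x^2(4N-1)}$, which shows that the square class of $r^2$ coincides with that of $(4N-1)(A_3+\sqrt{B})(A_4+\sqrt{B})$; since $(A_3+\sqrt{B})(A_4+\sqrt{B})=2x^2\bigl(x^2+2(2N-1)(x-1)+\sqrt{B}\bigr)$, the claim reduces to checking that the square class of this bracket is independent of the chosen rational point $(x,\sqrt{B})$ on $\CN$. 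Establishing this constancy is the crux; once it is available, the corollary follows at once from the three steps above.
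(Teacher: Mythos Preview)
The paper gives no proof of this corollary at all; it is stated as an immediate consequence of the preceding results and then illustrated by the example with $R=5/2,\ r=2$. Your three-step outline --- deduce positive rank of $\mathcal{E}_N$ from the hypothesis triangle, invoke the corollary of Section~\ref{sec:triangles} to obtain infinitely many non-similar rational triangles with $R/r=N$, and seat each of them in $(C,E)$ via scaling and Euler's relation~\eqref{euler} --- is precisely the argument one reads between the lines, so in that sense you match the paper.

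Where you go beyond the paper is in isolating a genuine difficulty that it passes over in silence: to carry a rational triangle $T_k$ into the fixed pair $(C,E)$ one must rescale by $\lambda_k=R/R_k$, and the rescaled sides stay rational only if $\lambda_k\in\Q$, i.e.\ only if $R_k^2$ (equivalently $\Delta_k^2$) lies in the same square class as $R^2$ for the hypothesis triangle. Your formula
\[
r^2=\frac{s^2(A_3+\sqrt B)(A_4+\sqrt B)}{4x^2(4N-1)}
\]
is correct, and so is the factorisation $(A_3+\sqrt B)(A_4+\sqrt B)=2x^2\bigl(x^2+2(2N-1)(x-1)+\sqrt B\bigr)$; one can even check that
\[
\bigl(x^2+2(2N-1)(x-1)\bigr)^2-B=-4(4N-1)(x-1)^2,
\]
so the values of your bracket at $(x,\pm\sqrt B)$ multiply to the fixed square class $-(4N-1)$. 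But, as you honestly concede, none of this pins down the square class at a \emph{single} rational point, and you stop short of proving the constancy. Thus your argument is more scrupulous than the paper's --- you have located exactly where the work lies --- but it remains incomplete at that very spot, and the paper itself supplies nothing to fill it.
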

Figure~\ref{fig:Poncelet} shows three such Poncelet 
triangles with $R=5/2, r=2$:
\begin{alignat*}{5}
f_1&=5\qquad\qquad& g_1&=4\qquad\qquad& h_1&=3   \qquad\quad\quad& u&=-\frac14 \quad\quad\quad&v&=\frac54\\
f_2&=\frac{204}{65}&g_2&=\frac{416}{85}&h_2&=\frac{700}{221}& u&=\frac{169}{64} &v&=-\frac{4355}{512}\\
f_3&=\frac{6757}{5513}&g_3&=\frac{37697}{8621}&h_3&=\frac{126540}{34717}& u&=-\frac{64009}{22201} &v&=\frac{53116085}{6615898}
\end{alignat*}

\begin{figure}[h!]
\begin{center}
\definecolor{darkgreen}{rgb}{0.,.6,0.}
\begin{tikzpicture}[line cap=round,line join=round,x=33,y=33]
\clip(-0.645015958872033,-1.126532812635162) rectangle (8.078104646295522,4.125051470475843);

\draw [line width=.8pt] (2.,1.5) circle (2.5);
\draw [line width=.8pt] (6.,2.) circle (2.);
\draw [line width=0.4pt,domain=0.0:8.078104646295522] plot(\x,{(-0.-0.*\x)/4.});
\draw [line width=0.4pt,domain=0.0:8.078104646295522] plot(\x,{(-0.--3.*\x)/4.});
\draw [line width=0.4pt,domain=0.6560000000000001:8.078104646295522] plot(\x,{(-2.1244970414201183--0.3476449704142018*\x)/3.119147928994082});
\draw [line width=0.4pt,domain=0.6560000000000001:8.078104646295522] plot(\x,{(-4.375916955017303--3.5007335640138417*\x)/3.420124567474048});
\draw [line width=0.4pt,domain=1.9550261279991028:8.078104646295522] plot(\x,{(-1.7384715928463053--0.27901805526407353*\x)/1.1934668366282712});
\draw [line width=0.4pt,domain=1.9550261279991028:8.078104646295522] plot(\x,{(-9.58853541833022--3.78532480320648*\x)/2.1890121174593196});
\draw[line width=1.pt,color=blue,fill=blue,fill opacity=0.10000000149011612] (0.656,-0.608) -- (3.775147928994083,-0.26035502958579837) -- (4.076124567474048,2.89273356401384) -- cycle;
\draw[line width=1.pt,color=red,fill=red,fill opacity=0.10000000149011612] (0.,0.) -- (4.,0.) -- (4.,3.) -- cycle;
\draw[line width=1.pt,color=darkgreen,fill=darkgreen,fill opacity=0.10000000149011612] (1.9550261279991028,-0.9995954374332752) -- (3.148492964627374,-0.7205773821692016) -- (4.144038245458423,2.785729365773205) -- cycle;

\begin{small}
\draw [fill=white] (0.,0.) circle (1.5pt);
\draw [fill=white] (4.,0.) circle (1.5pt);
\draw [fill=white] (4.,3.) circle (1.5pt);
\draw[color=black] (0.8,3.5) node {$C$};
\draw[color=black] (7.66,3.5) node {$E$};
\draw [fill=white] (0.656,-0.608) circle (1.5pt);
\draw [fill=white] (3.775147928994082,-0.2603550295857983) circle (1.5pt);
\draw [fill=white] (4.076124567474048,2.8927335640138416) circle (1.5pt);
\draw [fill=white] (0.,0.) circle (1.5pt);
\draw [fill=white] (1.9550261279991028,-0.9995954374332752) circle (1.5pt);
\draw [fill=white] (3.148492964627374,-0.7205773821692016) circle (1.5pt);
\draw [fill=white] (4.144038245458423,2.785729365773205) circle (1.5pt);
\end{small}
\end{tikzpicture}
\caption{Poncelet's theorem for triangles.}\label{fig:Poncelet}
\end{center}
\end{figure}
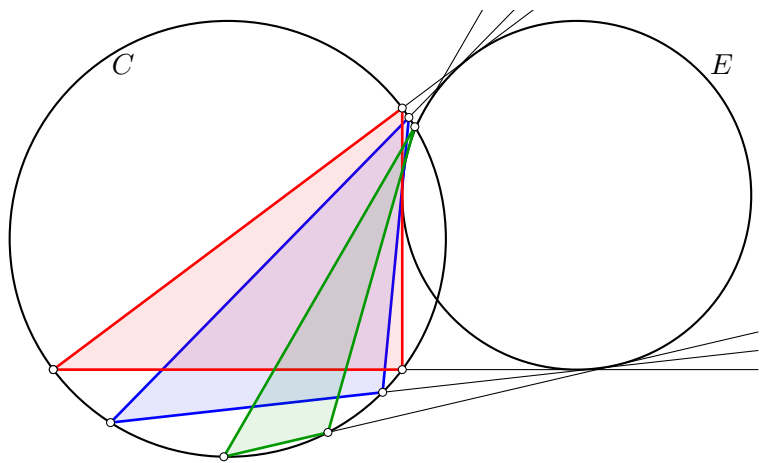






\bibliographystyle{plain}

\end{document}